\numberwithin{equation}{section}
\numberwithin{figure}{section}
\theoremstyle{plain}
\newtheorem{thm}{\protect\theoremname}[section]
\theoremstyle{plain}
\newtheorem{cor}[thm]{\protect\corollaryname}
\theoremstyle{plain}
\newtheorem{lem}[thm]{Lemma}
\theoremstyle{definition}
\newtheorem{defn}[thm]{\protect\definitionname}
\theoremstyle{definition}
\newtheorem{exmp}[thm]{Example}
\theoremstyle{definition}
\newtheorem{qn}{Question}
\newtheorem{rem}[thm]{\protect\remarkname}
\title{Stabilization and satellite construction of doubly slice links}
\author{Hongtaek Jung}
\address{Center for Geometry and Physics, Institute for Basic Science}
\email{htjung@ibs.re.kr}
\author{Sungkyung Kang}
\address{Center for Geometry and Physics, Institute for Basic Science}
\email{sungkyung38@icloud.com}
\author{Seungwon Kim}
\address{Center for Geometry and Physics, Institute for Basic Science}
\email{math751@ibs.re.kr}
\subjclass[2010]{Primary 57M25; secondary 57M27}
\providecommand{\corollaryname}{Corollary}
\providecommand{\definitionname}{Definition}
\providecommand{\remarkname}{Remark}
\providecommand{\theoremname}{Theorem}
\renewcommand{\setminus}{-}
\begin{document}

\begin{abstract}
    A 2-component oriented link in $S^3$ is called weakly doubly slice if it is a cross-section of an unknotted sphere in $S^4$, and strongly doubly slice if it is a cross-section of a 2-component trivial spherical link in $S^4$. We give the first example of 2-component boundary links which are weakly doubly slice but not strongly doubly slice. We also introduce a new invariant $g_{st}$ of homotopically trivial links that measures the failure of a link from being strongly doubly slice and that bounds the doubly slice genus $g_{ds}$ from below. Our examples have arbitrarily large doubly slice genus but satisfy $g_{st}=1$. We also prove that the Conway-Orson signature lower bound on $g_{ds}$ is actually a lower bound on $g_{st}$.
\end{abstract}
\maketitle

\tableofcontents

\section{Introduction}

In this paper we study interference between 4-dimensional surface-link theory and 3-dimensional link theory.

 One motivation to study the interaction between 3- and 4-dimensional link theory can be found in the 4-dimensional unknotting conjecture, which states that a given embedded 2-sphere $F$ in $S^4$ bounds a 4-ball if and only if the fundamental group of the complement of $F$ is infinite cyclic. In the topological category, this was solved by Freedman \cite{freedman1984,freedman1990}, but in the smooth category, this still remains open. One way to handle knotted surfaces in 4-manifolds is to look at its 3-dimensional sections. For example, in \cite{kawauchi1982}, Kawauchi, Shibuya and Suzuki introduced the normal form, which puts a knotted surface in a nice Morse position so that we can represent the surface by its 3-dimensional cross-section (link) and bands. In this presentation, if one can get a trivial link or trivial knot in its 3-dimensional cross-section, then the surface looks very close to the unknotted surface. For a 2-knot, it is conjectured that a surface-knot is smoothly unknotted if and only if its 3-dimensional cross-section is an unknot. Then we can ask a naive but natural question: if a 3-dimensional cross-section of a $2$-knot is knotted, is the $2$-knot knotted? The answer is negative, since the connected sum of any knot with its mirror is always a 3-dimensional cross-section of a 1-twist spun knot, which is isotopic to a trivial $2$-knot. However, from this observation, the following interesting question can occur: which knots can be a 3-dimensional cross-section of a trivial $2$-knot?

The simplest question in this direction is to ask what kind of properly embedded surfaces can a given knot $K$ bound in $B^4$. When $K$ bounds a disk -- locally flat or smooth, depending on your choice of a category -- we say that $K$ is slice. The invariant that measures the failure of a knot $K$ from being slice is called the 4-genus $g_4(K)$, which is defined by the minimum genus of properly embedded surfaces in $B^4$ that bound $K$. 

Now consider gluing two $B^4$ along their boundaries to obtain $S^4$. In this point of view, we see that a knot $K$ is slice if and only if it is a cross-section of some embedded $S^2$ in $S^4$. But this embedded 2-sphere $F$ might be knotted in $S^4$; when $F$ can be chosen to be unknotted in $S^4$, we say that $K$ is \emph{doubly slice}. The failure of a knot $K$ from being doubly slice is measured by the \emph{doubly slice genus} $g_{ds}(K)$, that is the minimum genus of unknotted embedded surfaces in $S^4$ whose cross-section is $K$. This invariant was first introduced in \cite{livingston2015}. Note that a slice knot, and even a ribbon knot, can have arbitrarily large doubly slice genus, as shown first in \cite{chen2021}.

For a link $L$ in $S^3$, we need more elaborate definition for the doubly slice genus. Given an orientation on $L$ and a coloring $\mu$ on $L$, i.e. a surjective map $\mu:\pi_0(L) \rightarrow C$ for some set $C$, we say that an $n$-component \emph{oriented} link $L$ is $\mu$-\emph{doubly slice} ($1\le \mu \le n$) if there exists a $\vert C \vert$-component trivial spherical link $F=\cup_{c\in C} F_c$ in $S^4$ such that $F_c \cap S^3 =\cup_{\mu(L_0 )=c} L_0$. For any given oriented link, we use the term weakly (strongly) doubly slice to denote the doubly sliceness when all components have different (same) color; note that being strongly doubly slice does not depend on the orientations. Introduced by Conway-Orson  \cite{conway2021},  the (strong) doubly slice genus $g_{ds}(L)$ of an unoriented $n$-component link $L$
\[
g_{ds}(L) = \min\{g(F)\,|\, F\text{ is an } n\text{-component trivial surface-link in }S^4\text{ such that }L=F\cap S^3\},
\]
measures how far $L$ is from being strongly doubly slice.

A strongly doubly slice $n$-component link is the cross-section of the boundary of $n$ disjoint embedded 3-balls. Thus, it is natural to ask a more general question: When a given $n$-component link $L\subset S^3$ can be realized as the cross-section of the boundary of $n$ disjoint 3-manifolds. An $n$-component link $L$ with this property is called a \emph{boundary link}. Obviously, strongly doubly slice links are boundary links and their components are doubly slice; also, they are weakly doubly slice with respect to all orientations. However, while Conway-Orson \cite{conway2021} constructed several obstructions which can detect such an example using abelian invariants of links, but no examples of weakly doubly slice boundary links which are not strongly doubly slice were previously known. Also, McCoy-McDonald \cite{mccoy2021} proved that some 2-component pretzel links are weakly doubly slice for all orientations but not strongly doubly slice, but we will see in Section \ref{motivatingex} that some of them are not boundary links; we conjecture that all such pretzel links are non-boundary.

For knots, doubly slice genus is always well-defined, as every knot in $S^3$ is a cross-section of an unknotted closed surface in $S^4$. However, not all links can be written as cross-sections of trivial surface-links in $S^4$; a link appears as such a cross-section if and only if it is a boundary link. Thus, it is natural to introduce a modification of $g_{ds}$ so that it can be defined for more general class of links. To do so, we start from the observation that every surface-knot in $S^4$ can be unknotted by a sequence of stabilizations followed by  destabilizations. Under a mild condition on a surface-link, one can always find an unlinking sequence, where we do not allow stabilizations to connect different components, that brings the given surface-link to a trivial surface-link. Measuring complexity of unknotting sequences yields a new invariant $g_{st}$ called the stabilization genus, whose precise definition is given in Section \ref{secdsg}. One can interpret $g_{st}$ as a distance between a given surface-link and a trivial surface-link. Associated to $g_{st}$ is its 3-dimensional version, also denoted as $g_{st}$, which is defined as 
\[
g_{st}(L)=\min\{ g_{st}(F)\,|\, F\text{ is a }n\text{-component surface-link such that } F\cap S^3 = L\}.
\]
The stabilization genus $g_{st}$ also measures how far a given link $L$ is from being strongly doubly slice like the doubly slice genus $g_{ds}$ did. Furthermore, we will see in Section \ref{secdsg} that $g_{st}$ is defined for  homotopically trivial links, which is a less restrictive condition than being a boundary link.

In this paper, we give the first examples of $2$-component boundary links which are weakly doubly slice with respect to all orientations but not strongly doubly slice. In particular, we prove the following theorem.
\begin{thm}
\label{mainthm1}
For any positive integer $n$, there exists a weakly doubly slice boundary 2-component link $L_n$ consisting of doubly slice components such that $g_{st}(L_n)=1$ and $g_{ds}(L_n) \ge 2n$.
\end{thm}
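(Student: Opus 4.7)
The plan is to construct $L_n$ via an infection (satellite) operation applied to a simple seed link and to establish the required properties in sequence.

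First, I would choose a seed 2-component boundary link $L_0$ that is already weakly doubly slice and has $g_{st}(L_0) = 1$. A natural candidate is the 2-component unlink together with an explicit stabilization-one surface-link $F_0 \subset S^4$ bounding it, obtained by attaching a single 1-handle to the trivial 2-sphere link (the handle not connecting the two spheres) and arranging that $F_0 \cap S^3 = L_0$. This baseline has components that are individually doubly slice (each is an unknot), and one has substantial flexibility in how to place the stabilizing 1-handle relative to the equatorial $S^3$.

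Next, I would pick an infection curve $\eta \subset S^3 - L_0$ that is unknotted in $S^3$ and null-homologous in each Seifert surface of $L_0$ (so infection preserves the boundary link structure and keeps each component individually doubly slice, since the infection occurs in the complement of each Seifert disk), and that simultaneously bounds an embedded disk on the stabilizing handle of $F_0$ (so that after infection by any knot the resulting surface-link $F_n$ still has stabilization genus $1$ and $L_n$ remains weakly doubly slice). The infection knot $J_n$ should itself be doubly slice, to preserve weak doubly sliceness of $L_n$ and of its components, while carrying secondary invariants that grow linearly in $n$; a natural candidate is the $n$-fold connected sum of a fixed doubly slice knot with nontrivial higher-order invariants (e.g.\ Casson--Gordon or twisted signature data). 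By construction, $L_n$ then satisfies the qualitative properties and the upper bound $g_{st}(L_n) \leq 1$; combined with $g_{ds}(L_n) \geq 2 > 0$ (which forces $L_n$ not to be strongly doubly slice, so $g_{st}(L_n) \geq 1$), this gives $g_{st}(L_n) = 1$.

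The main obstacle is establishing the lower bound $g_{ds}(L_n) \geq 2n$. The difficulty is that this bound must be strictly stronger than the Conway--Orson signature bound, since the latter is shown in this paper to bound $g_{st}$ and hence cannot grow within our family. I expect to use metabelian or Casson--Gordon type signatures computed from finite cyclic branched covers of a candidate trivial surface-link filling, together with a satellite formula showing that each copy of $J_n$ in the infection contributes at least two to the genus of any strongly doubly slice surface-link bounding $L_n$. The delicate technical point is to establish such a satellite formula in the \emph{strongly doubly slice} setting (not merely the slice setting), and to verify that these higher-order bounds vanish on the one-stabilization surface-link $F_n$ witnessing $g_{st} = 1$ --- otherwise the same lower bound would apply to $g_{st}$ as well, contradicting $g_{st}(L_n) = 1$.
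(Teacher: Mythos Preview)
Your proposal is a plan rather than a proof, and it contains a fatal design error in addition to the acknowledged gap.

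The fatal error is your insistence that the infection knot $J_n$ be doubly slice. By Theorem~\ref{satthm} of the paper, if the pattern link is strongly doubly slice (and the $2$-component unlink certainly is) and the companion is doubly slice, then the resulting satellite is \emph{strongly} doubly slice. So your $L_n$ would satisfy $g_{ds}(L_n)=0$, not $g_{ds}(L_n)\ge 2n$. You do not need $J_n$ doubly slice for the components of $L_n$ to be doubly slice (with a Bing-type pattern they are unknotted regardless of the companion), nor for weak doubly sliceness (slice companion and degree-$0$ pattern suffice, again by Theorem~\ref{satthm}). The whole point is to use a companion that is slice but \emph{not} doubly slice; this is what creates the gap between $g_{st}$ and $g_{ds}$.

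For the lower bound, you gesture toward Casson--Gordon or metabelian invariants and leave the satellite formula as an open ``delicate technical point.'' The paper's argument is both more concrete and more elementary: take $L_n=B(K_n)$ with $K_n=\sharp^n 8_{20}$, and for any trivial surface-link $F=F_1\cup F_2$ with cross-section $B(K_n)$, take the branched double cover of $S^4$ along $F_1$. This cover is $\sharp^{g_1}(S^2\times S^2)$, the equatorial $S^3$ lifts to $S^3$, and one component of the lift of $L_2$ is $K_n\sharp K_n^r$. Applying the signature defect formula of Conway--Nagel to the two halves and using $\pi_1$ and Euler characteristic of the lift of $F_2$'s complement gives $2|\sigma_{K_n}(\omega)|\le g_{ds}(B(K_n))$ for all $\omega$. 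The crucial trick is that this uses the \emph{abelian} Levine--Tristram signature, but evaluated at $\omega=e^{\pi i/3}$, which is a root of the Alexander polynomial of $8_{20}$ and hence not a concordance-invariant value; there $\sigma_{8_{20}}(\omega)=1$, so $\sigma_{K_n}(\omega)=n$. No higher-order invariants are needed, and there is no satellite formula to establish: the branched cover simply converts the link problem into a knot signature problem.
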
 

Recall that Conway-Orson \cite{conway2021} gave a lower bound on the doubly slice genus of $\mu$-colored links using multivariable signature, i.e. $\vert \sigma_L(\omega)\vert \le g_{ds}(L)$ for any $\omega\in (S^1\setminus\{1\})^\mu$. It turns out that this lower bound vanishes identically for our examples. In general, we will see that $\vert \sigma_L\vert$ actually gives a lower bound on a smaller quantity $g_{st}$. 

\begin{thm}
Let $L$ be a  $\mu$-colored link in $S^3$. Then $\vert \sigma_L(\omega)\vert \le g_{st}(L)$ for any $\omega\in (S^1\setminus\{1\})^\mu$.
\end{thm}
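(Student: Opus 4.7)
The plan is to reduce the claim to the Conway--Orson signature lower bound on $g_{ds}$ by producing, from any unknotting sequence realizing $g_{st}(L)$, a trivial surface-link that still has $L$ as its $S^{3}$-cross-section.

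To set up, I would pick a surface-link $F\subset S^{4}$ with $F\cap S^{3}=L$ that achieves $g_{st}(L)$, together with an unknotting sequence $F=F_{0}\rightsquigarrow F_{1}\rightsquigarrow\cdots\rightsquigarrow F_{k}=\widetilde F$ of component-respecting stabilizations whose complexity equals $g_{st}(L)$ and whose terminus $\widetilde F$ is a trivial $n$-component surface-link. Each stabilization is the attachment of a $1$-handle inside a small $4$-ball (equivalently, boundary connect sum with a standard trivial torus in a ball), and these balls can be isotoped into a single hemisphere $B^{4}_{+}\subset S^{4}$, disjoint from the equatorial $S^{3}$. A standard ambient-isotopy argument shows that this rearrangement neither fuses components nor alters the complexity of the sequence. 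In particular it preserves $F_{i}\cap S^{3}=L$ at every stage, so that $\widetilde F\cap S^{3}=L$.

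With $\widetilde F$ trivial and $\widetilde F\cap S^{3}=L$, the Conway--Orson signature lower bound applied directly to $\widetilde F$ yields
\[
|\sigma_{L}(\omega)|\le g(\widetilde F),
\]
for every $\omega\in(S^{1}\setminus\{1\})^{\mu}$, and by our choice of the unknotting sequence the right-hand side is exactly $g_{st}(L)$. This completes the reduction.

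The main obstacle is the localization step: one must verify that performing each stabilization inside a ball disjoint from $S^{3}$ really does preserve both the cross-section and the complexity as defined in Section~\ref{secdsg}. The cross-section is preserved by construction (the $4$-ball containing the new handle meets $S^{3}$ in the empty set), and the hypothesis that stabilizations are not allowed to connect different components means that each handle is attached to a single component of the ambient surface-link. The needed ambient isotopy---pushing the attaching region of each handle off $S^{3}$ without crossing other components---is a routine application of general position, but it has to be checked that it respects whatever bookkeeping the definition of complexity in Section~\ref{secdsg} imposes (for instance, non-negativity of genus along the sequence, or the requirement that intermediate surface-links lie in a prescribed isotopy class). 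Once this verification is in place, the argument is complete.
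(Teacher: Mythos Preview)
Your argument has a genuine gap rooted in a misreading of the definition of an unlinking sequence. In Section~\ref{secdsg} an unlinking sequence consists of stabilizations \emph{followed by destabilizations}: one passes from $F=F_1$ up to some $F_m$ of maximal genus and then compresses down to a trivial surface-link $F_n$. The quantity $g_{st}(L)$ records the genus of the middle term $F_m$, not of the terminus. Your proposal treats the sequence as consisting of stabilizations alone and assumes $\widetilde F$ is simultaneously trivial and of genus $g_{st}(L)$; neither is justified. The surface $F_m$ has the right genus and (after pushing the stabilizing $3$-balls off $S^3$) the right cross-section, but it is \emph{not} trivial---it is only a stabilization of a trivial link. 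The trivial link $F_n$, on the other hand, need not have $L$ as its cross-section, and the compressing $3$-balls used in the destabilizations $F_m\to F_n$ cannot in general be isotoped off the equatorial $S^3$. (A secondary issue: a stabilization in the sense of Section~\ref{secdsg} is along an arbitrary embedded $3$-ball meeting $F$ in two disks, not a local handle; it is not equivalent to connect-summing with a standard torus in a ball.)

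Indeed, if your localization step worked it would prove $g_{ds}(L)\le g_{st}(L)$; together with the easy inequality $g_{st}(L)\le g_{ds}(L)$ this would force $g_{st}=g_{ds}$, contradicting Theorem~\ref{mainthm1}, which produces links with $g_{st}=1$ and $g_{ds}\ge 2n$. The paper's proof avoids this by working directly with the non-trivial common stabilization $F$ (your $F_m$). Because $F$ is a stabilization of a trivial link, Lemma~\ref{lem1} gives $\dim_{\mathbb C} H_1(X_F;\mathbb C_\omega)\le 1$, and one then reruns the Conway--Orson Mayer--Vietoris/Euler-characteristic argument with $F$ in place of a genuinely trivial link; the single extra dimension allowed by Lemma~\ref{lem1} is exactly what the computation absorbs.
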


We can extend our discussion to higher dimensional cases. Recall that an $n$-component $m$-link is an oriented embedded $n$-copies of $S^m$ in $S^{m+2}$. An $m$-link is unknotted if each of its component is isotopic to the standard $m$-sphere. An $n$-component $m$-link is boundary if it admits $n$ disjoint Seifert solids. It is called weakly (strongly, resp.) doubly slice is it can be realized as a cross-section of the trivial $(m+1)$-knot (trivial $n$-component $(m+1)$-link, resp.). Although less quantitative, we have a similar result for higher dimensional cases.

\begin{thm}\label{mainthm4}
Let $m$ be any natural number not equal to 2.  There is a 2-component boundary $m$-link which is unknotted and weakly doubly slice but not strongly doubly slice. 
\end{thm}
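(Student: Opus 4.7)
I would treat the two ranges $m=1$ and $m\ge 3$ separately. For $m=1$, the link $L_1$ produced by \autoref{mainthm1} is already a 2-component boundary link that is weakly doubly slice and satisfies $g_{st}(L_1)=1$, hence is not strongly doubly slice. The only extra point is to arrange that its components are unknotted; I expect that the explicit building blocks later used to construct $L_n$ can be chosen with unknot components without disturbing the invariants detecting $g_{st}$, or a slight variant of the construction can be given directly, supplying the required example.

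For $m\ge 3$, my plan is to bootstrap from the $m=1$ example via iterated Artin spinning. Let $L\subset S^3$ denote the link from the previous case, and let $L^{(m-1)}\subset S^{m+2}$ denote its $(m-1)$-fold spin. Spinning sends Seifert solids to Seifert solids, takes unknots to standardly embedded $m$-spheres, and commutes with taking equatorial cross-sections; consequently $L^{(m-1)}$ is automatically a 2-component boundary $m$-link with unknotted components. Weak double sliceness is inherited by simultaneously spinning the ambient unknotted 2-sphere $F\subset S^4$ that realizes $L=F\cap S^3$: the iterated spin $F^{(m-1)}$ is an unknotted $(m+1)$-sphere in $S^{m+3}$ whose equatorial cross-section is $L^{(m-1)}$.

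The main obstacle is to verify that $L^{(m-1)}$ is not strongly doubly slice. My plan is to isolate an algebraic obstruction for $L$ that is preserved by spinning. Since Artin spinning preserves the fundamental group of the link exterior, and in particular the Alexander module and the Blanchfield pairing of the boundary link, any obstruction to strong double sliceness formulated via a splitting property of these invariants descends from $L$ to $L^{(m-1)}$. Concretely, a hypothetical strong double slicing of $L^{(m-1)}$ in $S^{m+3}$ by a pair of disjoint unknotted $(m+1)$-spheres would impose a metabolic splitting of the Blanchfield pairing of $L^{(m-1)}$, which, after identifying it with the Blanchfield pairing of $L$, would contradict the obstruction used to detect non-strong-double-sliceness of $L$ in the $m=1$ case. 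The delicate step is matching the higher-dimensional algebraic data to the $m=1$ obstruction; this is where I expect most of the technical work to lie.
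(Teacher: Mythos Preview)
Your $m=1$ case is fine: the links $L_n$ from \autoref{mainthm1} are Bing doubles, and Bing doubles of any knot have unknotted components (this is proved earlier in the paper), so no extra arrangement is needed.

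The gap is in your $m\ge 3$ strategy. You propose to carry the obstruction through spinning via the Blanchfield pairing (equivalently, the Alexander module data) of the link $L$ itself. But for the examples at hand---Bing doubles---these abelian invariants of the \emph{link} do not detect the failure of strong double sliceness. Indeed, the paper explicitly notes that the multivariable signature vanishes identically for these examples, and that Conway--Orson's abelian obstructions had not produced any weakly-but-not-strongly doubly slice boundary links. The $m=1$ obstruction in \autoref{mainthm1} is \emph{not} a Blanchfield-pairing statement about $L$: it passes to the branched double cover over one component and uses the Levine--Tristram signature of the lifted knot $K\sharp K^r$. So the sentence ``would contradict the obstruction used to detect non-strong-double-sliceness of $L$ in the $m=1$ case'' does not go through as written. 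A further warning sign: your spinning argument would apply equally to $m=2$, yet the theorem explicitly excludes that case.

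The paper takes a different route. Rather than spinning, it works directly in each dimension: for every $m\ne 2$ it produces a slice $m$-knot $K$ with $K\sharp K^r$ not doubly slice (for odd $m$ via Seifert forms and the double Witt group $CH^{(-1)^q}(\mathbb{Z})$, for even $m>2$ via Stoltzfus's map to $CH^{(-1)^q}(\mathbb{Q}/\mathbb{Z})$). Then $B(K)$ is the desired link, and a branched-double-cover argument---take the cover of $S^{m+3}$ over one trivial $(m+1)$-sphere, observe the lift of the other component is $K\sharp K^r$---shows $B(K)$ cannot be strongly doubly slice. If you wanted to rescue a spinning approach, you would in effect need to prove that the iterated spin of $K\sharp K^r$ is not doubly slice in each target dimension, which is precisely the content of the paper's dimension-by-dimension construction.
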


Our examples are constructed by performing satellite operations, where we use companion knots which are slice but not doubly slice and patterns which are induced by Brunnian links. When the companion is topologically or smoothly doubly slice, the satellite knot that we obtain is also topologically or smoothly doubly slice, respectively. This observation leads us to the following theorem, which will be proven in Section \ref{Topologicallynotsmoothly}.

\begin{thm}
\label{mainthm2}
There exists a 2-component boundary link which is smoothly weakly doubly slice for all orientations and topologically strongly doubly slice but not smoothly strongly doubly slice.
\end{thm}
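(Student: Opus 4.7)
The plan is to repeat, essentially verbatim, the satellite construction used in the proof of \autoref{mainthm1}, but with the companion knot replaced by a carefully chosen one. Recall that the construction takes a slice companion $K$ together with a pattern induced by a Brunnian link, and the text already emphasizes that the smooth/topological category of the output is inherited from the category in which the companion is doubly slice. I would therefore choose as companion a knot $K$ that is
\emph{smoothly slice}, \emph{topologically doubly slice}, but \emph{not smoothly doubly slice},
and then let $L$ be the satellite link associated to $K$ and the same Brunnian pattern used for \autoref{mainthm1}.

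For existence of such a companion, the natural candidate is a topologically slice knot with Alexander polynomial of the form $f(t)f(t^{-1})$ (so that algebraic double-slice obstructions vanish and one expects a topological double-slicing via the usual surgery-theoretic arguments in the topologically trivial fundamental group regime), for which a smooth concordance invariant sensitive to double sliceness (for instance, a $d$-invariant of the double branched cover, or Chen's smooth obstructions from \cite{chen2021}) is nonzero. I would invoke known examples from the literature of topologically doubly slice knots that carry a nontrivial smooth obstruction, or construct one by connecting a topologically doubly slice knot with trivial Alexander polynomial to a controlled Chen-type example.

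Given this $K$, the verification that $L$ is smoothly weakly doubly slice for every orientation is identical to the corresponding step for \autoref{mainthm1}, since that argument uses only smooth sliceness of the companion. The verification that $L$ is topologically strongly doubly slice also follows from the satellite construction: two topological unknotted spheres in $S^4$ realizing a topological double slicing of $K$, doubled along the Brunnian pattern exactly as in the smooth construction of \autoref{mainthm1}, produce a topologically trivial 2-component 2-sphere link in $S^4$ whose cross-section is $L$.

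The main obstacle is showing that $L$ is not smoothly strongly doubly slice. The plan is to argue by contradiction: a smooth strong double slicing of $L$ would provide two disjoint smoothly unknotted 2-spheres in $S^4$ whose cross-section is $L$. Using the Brunnian structure of the pattern, one can tube the two spheres along a smooth satellite companion torus and reduce to a smoothly embedded pair of 2-spheres in $S^4$ whose cross-section contains the companion $K$ as one of its components, hence recover a smooth double slicing of $K$, contradicting its choice. Making the tubing step literally give smooth spheres rather than higher-genus surfaces is the technical heart of the argument; topologically this recovery is automatic, but in the smooth category one needs to check that the Brunnian pattern's linking data with the companion meridian forces the reconstructed surfaces to have genus zero. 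This is where I expect the proof to require the most care, and I would model it on the analogous companion-extraction step in the obstruction portion of \autoref{mainthm1}.
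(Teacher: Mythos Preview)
Your overall strategy---choose a companion $K$ that is smoothly slice and topologically doubly slice but not smoothly doubly slice, then take its Bing double---matches the paper's. The verification that $B(K)$ is smoothly weakly doubly slice and topologically strongly doubly slice is exactly as you say.

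However, your proposed obstruction step has real gaps. First, the ``tubing along a companion torus to recover a double slicing of $K$'' does not work: neither component of $B(K)$ is $K$, and there is no tubing operation on the two unknotted spheres that produces spheres whose cross-section is $K$. What the paper actually does (and what the proof of \autoref{mainthm1} does in the analogous step you allude to) is take the \emph{branched double cover} along one component of the trivial $2$-link. The lift of the other component of $B(K)$ is then $K\sharp K^r$, not $K$. So the knot whose smooth double-sliceness you must obstruct is $K\sharp K^r$, and your companion must be chosen so that $K\sharp K^r$ is not smoothly doubly slice---a condition you never state. The paper uses Meier's knots precisely because his $d$-invariant obstruction is insensitive to orientation reversal and survives connected sums.

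Second, there is a Schoenflies subtlety you miss entirely. After branching over one unknotted sphere you get $S^4$ split along $S^3$ into two pieces $U$ and $V$, and $K\sharp K^r$ bounds disks $D_U\subset U$, $D_V\subset V$. But $U$ and $V$ are only known to be \emph{topological} $4$-balls (by topological Schoenflies); smoothly they might be exotic. So you cannot simply conclude $K\sharp K^r$ is smoothly doubly slice in the usual sense. The paper handles this by observing that Meier's obstruction uses only that the branched double covers $\Sigma(D_U)$, $\Sigma(D_V)$ are rational homology balls providing metabolizers of $H_1(\Sigma(K\sharp K^r);\mathbb{Z})$, and that $d$-invariants are $\mathbb{Q}$-homology $\mathbf{Spin}^c$-cobordism invariants. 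A generic smooth obstruction of the sort you gesture at would not survive this weakening.
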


\subsection*{Acknowledgements:} This work was supported by Institute for Basic Science (IBS-R003-D1).

\section{Motivating Example}\label{motivatingex}
In \cite{mccoy2021}, it is shown that for 2-component 4-strand pretzel links, the notion of slice and weakly doubly slice coincide. In particular, such a link $L$ is weakly doubly slice if and only if it is of the form $L=P(a,b,-b,-a)$, where at most one of $a$ and $b$ is even. When $a=\pm b$, then it is known that $L$ is strongly doubly slice, and if $a\ne b$ and $a,b$ are not relatively prime, then it is known that $L$ is not strongly doubly slice. Thus it is natural to ask the following question.

\begin{qn}
Is there a 2-component pretzel link which is a weakly doubly slice boundary link but not strongly doubly slice?
\end{qn} 

We give here an empirical evidence that there might not exist such a link.
\begin{thm}
\label{claytonboundary}
For any $p,n\ge 1$ such that $n$ is not a multiple of $2(2p+1)$, none of the 4-strand pretzel links $L_{p,n}=P(2p+1,2n,-2n,-2p-1)$ are boundary links.
\end{thm}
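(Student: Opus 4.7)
The plan is to exhibit an algebraic obstruction to being a boundary link and to compute it for the pretzel family.

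First, one verifies the necessary condition $\operatorname{lk}(L_{p,n})=0$ by tracing the two components through the four tangles of $P(2p+1,2n,-2n,-2p-1)$: with odd-even-even-odd parities, tangles $1$ and $4$ carry both strands of one component $L_1$, while in each even tangle the two components run antiparallel, so tangle $2$'s contribution of $n$ to the linking number is exactly cancelled by tangle $3$'s contribution of $-n$. Next, since $L_{p,n}$ has zero linking number and its second component is unknotted, the most natural first obstruction is the Sato-Levine invariant $\beta(L_{p,n})=\bar\mu_{L_{p,n}}(1122)\in\mathbb{Z}$, obtainable either as the coefficient of $z$ in $\nabla_{L_{p,n}}(z)$ or from a Seifert matrix produced by Seifert's algorithm on the pretzel diagram. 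If $\beta$ vanishes identically on this family, I would pass to higher Milnor $\bar\mu$-invariants, or to the torsion linking form on $H_1$ of the double cyclic branched cover $\Sigma_2(L_{p,n})$, which for a pretzel is the Seifert fibered space over $S^2$ with explicit Seifert invariants determined by $(2p+1,2n,-2n,-2p-1)$.

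The key computational step is to show that the chosen obstruction is nonzero precisely when $n$ is not a multiple of $2(2p+1)$. The shape of the hypothesis, and in particular the factor $2(2p+1)$, strongly suggests that the effective obstruction is modular rather than integer-valued and lives in a cyclic quotient of order $2(2p+1)$ coming from the torsion of $H_1(\Sigma_2(L_{p,n}))$. Concretely, I expect to construct a distinguished class in this torsion group whose self-linking pairing evaluates to $n/2(2p+1)\bmod \mathbb{Z}$, and then to invoke the principle that for a boundary link the torsion linking form on $H_1(\Sigma_2(L))$ must be metabolic, with Lagrangian arising from the Seifert surfaces pushed into $B^4$, which would force the pairing to vanish.

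The hardest part will be matching the vanishing locus of the obstruction to the exact condition $n\not\equiv 0\pmod{2(2p+1)}$: a too-coarse obstruction would rule out values of $n$ for which $L_{p,n}$ may actually be a boundary link (the case when $n$ is a nonzero multiple of $2(2p+1)$, which the theorem deliberately leaves open), while a too-weak obstruction would miss genuinely non-boundary cases. The needed sharpness will likely come from exploiting the palindromic symmetry $P(a,b,-b,-a)\cong P(-a,-b,b,a)$ of the pretzel and from carefully bookkeeping how the twist parameters $2p+1$ and $2n$ enter both the torsion subgroup and its self-pairing.
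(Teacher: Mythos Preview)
What you have written is a research plan, not a proof: nothing is actually computed, and each step is hedged with ``I would'' or ``I expect''. More seriously, the central obstruction you intend to rely on is not valid. You assert that for a boundary link the torsion linking form on $H_1(\Sigma_2(L))$ must be metabolic, with a Lagrangian coming from pushed-in Seifert surfaces. This is false: take $L$ to be the split union of a trefoil and an unknot, which is certainly a boundary link, yet the torsion in $H_1(\Sigma_2(L))$ is $\mathbb{Z}/3$ and a nondegenerate form on a group of non-square order cannot be metabolic. The construction you describe (push a Seifert surface into $B^4$ and take the branched double cover) is available for \emph{any} link admitting a Seifert surface and does not exploit the disjointness that characterizes boundary links; moreover the resulting $4$-manifold is not a rational homology ball once the surface has positive genus, so no metabolizer is produced. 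Your first-line obstruction, the Sato--Levine invariant, is a concordance invariant and these pretzels are slice, so you should expect it to vanish; and the fallback to ``higher Milnor invariants'' is left entirely unspecified.

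The paper's argument is quite different and completely explicit. The obstruction used is that for a boundary link the Seifert longitude of each component lies in the second commutator subgroup of the link group, hence has trivial image under the Fox differential into the Alexander module. One writes down a Wirtinger presentation for $\pi_1(S^3\setminus L_{p,n})$ with four generators $a,b,c,d$ and three relators, computes $\partial_b$ of each relator, and observes that specializing the second variable to $t=1$ amounts to deleting the unknotted component $U$. After this specialization the longitude of $U$ reads $b^n d^{-n}$, so $\partial_b(\lambda)|_{t=1}=1+s+\cdots+s^{n-1}$, while the relators force this to be divisible by $1-s+s^2-\cdots+s^{2p}$. Since $e^{\pi i/(2p+1)}$ is a root of the latter polynomial, it must also satisfy $e^{n\pi i/(2p+1)}=1$, which is exactly the condition $2(2p+1)\mid n$. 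This is where the modulus $2(2p+1)$ genuinely arises; it is a divisibility condition in $\mathbb{Z}[s^{\pm1}]$, not a torsion-linking phenomenon in a branched cover.
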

\begin{proof}
$L_{p,n}$ has one unknotted component, which we will denote by $U$. Denote the Seifert longitude of $U$ by $\lambda$, which we will regard as an element of $\pi_1(S^3\setminus L_{p,n})$. It is known that if a given link is a boundary link, then the Seifert longitudes of its components are contained in the second commutator of its link group, and so their Fox derivatives are trivial \cite{crowell1971}. Using this criterion, we will show, by explicit calculation, that the image of $\lambda$ under the Fox differential $\partial :\pi_1 (S^3\setminus L_{p,n} )\rightarrow A_{L_{p,n}}$ is nontrivial, where $A_{L_{p,n}}$ denotes the Alexander module of $L_{p,n}$.

Using Wirtinger presentation, we see that $\pi_1(S^3\setminus L_{p,n})$ has four generators, $a$, $b$, $c$, and $d$, as shown in Figure \ref{pretzel}, and three relators $r_1$, $r_2$, and $r_3$, defined as below:
\begin{align*}
    r_1 &= (ab^{-1})^p aba^{-1} (ba^{-1})^p (ad^{-1})^p a d^{-1} a^{-1} (da^{-1})^p,\\
    r_2 &= (ab^{-1})^p aba^{-1} (ba^{-1})^p (bc^{-1})^n b (cb^{-1})^n,\\
    r_3 &= \text{some word consisting only of }a,c\text{ and }d.
\end{align*}
Write the meridians of $L_{p,n}$ in $H_1 (L_{p,n};\mathbb{Z})$ as $s,t$, where $t$ denotes the meridian of $U$. Then the images of $a,b,c,d$ under the abelianization map are given by $s^{-1}$, $s$, $t$, and $s$, respectively. The derivatives of the relators $r_i$ with respect to the generator $b$ is given as follows:
\begin{align*}
    \partial_{b}(r_1) &=s^{-2p-1}(1-s+s^2-\cdots-s^{2p-1}+s^{2p}),\\
    \partial_{b}(r_2) &=s^{-2p-1}(1-s+s^2-\cdots-s^{2p-1}+s^{2p})+(t^{-1}-1)(1+st^{-1}+\cdots+(st^{-1})^{n-1}),\\
    \partial_{b}(r_3) &= 0.
\end{align*}
Suppose that $L_{p,n}$ is a boundary link. Then for any choice of longitude $\lambda$ of any component of $L_{p,n}$, we should have $\partial_{b} (\lambda) = p(s,t)\partial_b (r_1)+q(s,t)\partial_b(r_2)$ for some $p,q\in \mathbb{Z}[s^{\pm 1},t^{\pm 1}]$. Specializing this equation at $t=1$ gives:
\begin{align}
    \partial_b(\lambda)\vert_{t=1} &= p(s,1)\partial_b(r_1)\vert_{t=1} + q(s,1)\partial_b(r_2)\vert_{t=1} \\
    &= s^{-2p-1}(1-s+s^2-\cdots-s^{2p-1}+s^{2p})(p(s,1)+q(s,1)).
\end{align}

Now observe that specializing $\partial_b(\lambda)$ at $t=1$ is equivalent to ignoring $c$ while computing the Fox derivative, which is equivalent to ignoring the unknotted component in $L_{p,n}$. So any longitude $\lambda$ of the unknotted component of $L_{p,n}$ becomes $b^n d^{-n}$ after ignoring $c$, and thus we have 
\[
\partial_b(\lambda)\vert_{t=1}=\partial_b(b^n d^{-n}) = 1+s+\cdots+s^{n-1}.
\]
Hence, the polynomial $1+s+\cdots+s^{n-1}$ should be divisible by $1-s+s^2-\cdots-s^{2p-1}+s^{2p}$. Since $e^{\frac{\pi i}{2p+1}}$ is a root of $1-s+s^2-\cdots-s^{2p-1}+s^{2p}$, it should also be a root of $1+s+\cdots+s^{n-1}$. In particular, we should have 
\[
e^{\frac{n\pi i}{2p+1}}=1,
\]
which would imply that $n$ is a multiple of $2(2p+1)$, a contradiction. Therefore $L_{p,n}$ is not a boundary link.
\end{proof}

\begin{figure}[hbt]
    \centering
    \includegraphics[width=0.6\textwidth]{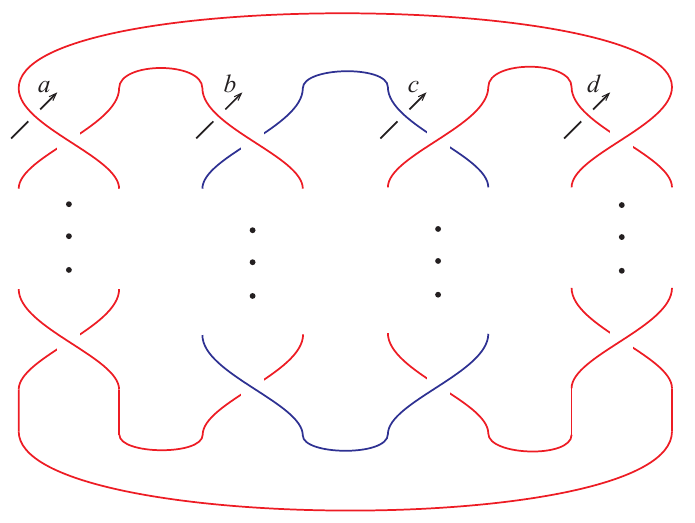}
    \caption{The pretzel link $L_{p,n}=P(2p+1,2n,-2n,-2p-1)$. Its two components are colored in red and blue.}
\label{pretzel}
\end{figure}

\section{Satellites along 3-component Brunnian links}
In this section, we will describe a construction which takes a 3-component Brunnian link with a distinguished component and a slice knot as an input and gives a weakly doubly slice boundary link as an output. The links that we get by this construction are potentially not strongly doubly slice; we will see that some of them are actually not strongly doubly slice in later sections.

Throughout this paper, all knots and links are assumed to be oriented. 

\begin{defn}[See also \cite{kim2020}]
\label{satellitedefn}
Let $P$ be a spherical link and $C$ be a spherical $n$-knot embedded in $S^{n+2}$ and a $(n+2)$-manifold $X$ respectively. Assume $C$ has a product neighborhood in $X$. Consider a simple loop $\gamma \subset S^{n+2}-\nu(P)$. Then there exists a diffeomorphism $\rho : \overline{S^{n+2} - \nu(\gamma)} \rightarrow \nu(C)$, where $\nu(C)$ denotes a tubular neighborhood of $C$. Let $K = \rho(P) \subset X$. We call $K$ the \emph{satellite link in $X$ of companion $C$ with pattern $(P,\gamma)$}. Equivalently, 
$$(X, K) = ((\overline{X - \nu(C))}\bigcup_{\partial \rho}(\overline{S^{n+2} - \nu(\gamma)}), P),$$ where
$$\partial \rho = \rho\restriction_{\partial(\overline{S^{n+2} - \nu(\gamma)})}\; : \partial(\overline{S^{n+2} - \nu(\gamma)}) \rightarrow \partial \nu(C) \simeq \partial (\overline{X - \nu(C)})$$
 and 
$$P \subset S^{n+2} - \nu(\gamma) \subset (\overline{X - \nu(C))}\bigcup_{\partial \rho}(\overline{S^{n+2} - \nu(\gamma)}) \simeq X.$$
 We say an oriented satellite $n$-link is degree $0$ if $[P] = 0 \in H_{n}(S^{n+2} - \nu(\gamma)) \simeq \mathbb{Z}$.
\end{defn}

Using the above notion of satellites along knotted sphere companions, we can now prove the following theorem.

\begin{thm}
\label{satthm}
Let $K$ be a doubly slice (resp. slice) knot and $L$ be an oriented link which is strongly (resp. weakly) doubly slice. Then $0$-framed satellite link which is obtained by taking $L$ as a pattern (resp. degree $0$ pattern) and $K$ as a companion is a  strongly (resp. weakly) doubly slice link.
\end{thm}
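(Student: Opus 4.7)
The strategy is to perform the satellite construction at the 4-dimensional level, using the surfaces in $S^4$ witnessing the double sliceness conditions on $K$ and $L$. Let $S_K \subset S^4$ be the unknotted 2-sphere (strong case) or a 2-sphere obtained by doubling a slice disk for $K$ (weak case), with $S_K \cap S^3 = K$, and let $F \subset S^4$ be the trivial $n$-component spherical link (strong case) or unknotted 2-sphere (weak case) with $F \cap S^3 = L$. The pattern-defining loop $\gamma \subset S^3 \setminus L$ is automatically unknotted in $S^4$, since it bounds a disk in $B^4$. Applying Definition~\ref{satellitedefn} with $n=2$, we form the 4-dimensional satellite with companion $S_K$ and pattern $(F, \gamma)$, choosing the diffeomorphism $\rho : S^4 \setminus \nu(\gamma) \to \nu(S_K)$ to extend the 3-dim 0-framing identification; this compatibility forces $F(S_K) := \rho(F)$ to satisfy $F(S_K) \cap S^3 = L(K)$, so it suffices to verify the appropriate global condition on $F(S_K)$ in $S^4$.

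For the strong case, we claim $F(S_K)$ is again a trivial $n$-component spherical link. Since both $S_K$ and $\gamma$ are unknotted in $S^4$, the neighborhoods $\nu(S_K)$ and $S^4 \setminus \nu(\gamma)$ are standard copies of $S^2 \times D^2$ in $S^4$, and with the 0-framing the satellite operation is, up to ambient isotopy of $S^4$, the identity on $(S^4, F)$; hence $F(S_K)$ is ambient isotopic to $F$ and inherits its triviality. For the weak case, where $S_K$ may be knotted, this argument fails and we instead use the degree $0$ assumption. Since $F$ is unknotted in $S^4$ it bounds an embedded 3-ball $B \subset S^4$, and the condition $[F] = 0 \in H_2(S^4 \setminus \nu(\gamma))$ translates to the algebraic intersection $B \cdot \gamma = 0$. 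A Whitney-trick argument in the simply-connected $S^4$ then lets us isotope $B$ rel boundary to make it disjoint from $\nu(\gamma)$; the image $\rho(B) \subset \nu(S_K) \subset S^4$ is a 3-ball bounded by $F(S_K)$, so $F(S_K)$ is an unknotted 2-sphere.

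The main obstacle lies in making rigorous the two ambient claims. For the strong case, one must show that gluing two standard $S^2 \times D^2$'s via the 0-framing reconstructs $S^4$ together with an ambient self-diffeomorphism carrying $\rho(F)$ back to $F$; this is essentially a Heegaard-type splitting computation in dimension 4, and is closely analogous to the classical fact that the 3-dim satellite of an unknot with unknotted $\gamma$ reproduces the pattern. For the weak case, the push-off requires an embedded Whitney disk for each pair of opposite-sign intersection points in $B \cap \gamma$, which is obtained from general position in dimension 4 together with $\pi_1(S^4) = 0$. Once these technical points are verified, the cross-section $L(K) = F(S_K) \cap S^3$ is exhibited as a cross-section of a trivial $n$-component spherical link (strong case) or an unknotted 2-sphere (weak case), establishing that $L(K)$ is strongly (respectively weakly) doubly slice.
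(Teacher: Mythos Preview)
Your argument follows the same strategy as the paper's: lift the satellite construction to dimension four using the surfaces $S_L$ and $S_K$, arrange that the resulting $4$-dimensional satellite $S_L(S_K)$ has cross-section $L(K)$, and then verify that $S_L(S_K)$ is a trivial surface-link (strong case) or an unknotted $2$-sphere (weak case). The strong case is handled identically in both proofs: since $S_K$ is isotopic to the unknotted $2$-sphere, the satellite with companion $S_K$ is isotopic to the satellite with trivial companion, which is $S_L$ itself.

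There is, however, a gap in your weak-case justification. You assert that $B\cdot\gamma=0$ allows you to isotope the $3$-ball $B$ rel boundary off $\gamma$ via the Whitney trick, with Whitney disks obtained ``from general position in dimension $4$ together with $\pi_1(S^4)=0$''. General position does not suffice here: a Whitney disk is $2$-dimensional while $B$ has codimension $1$ in $S^4$, so the interior of any such disk will generically meet $B$ in arcs (and may also meet $S_L=\partial B$, which is codimension $2$). This is exactly the dimension in which the Whitney trick becomes delicate, and the justification you give does not address it.

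The paper avoids this by isotoping $\gamma$ rather than $B$. The degree-$0$ hypothesis on the $3$-dimensional pattern says $\operatorname{lk}(L,\gamma)=0$; since every meridian of $L$ maps to the generator of $H_1(S^4\setminus S_L)\cong\mathbb{Z}$, this forces $\gamma$ to be nullhomologous---hence nullhomotopic, as $\pi_1(S^4\setminus S_L)\cong\mathbb{Z}$ for an unknotted $2$-sphere---in $S^4\setminus S_L$. In a $4$-manifold homotopic embedded circles are isotopic, so $\gamma$ may be isotoped within $S^4\setminus S_L$ to lie in $S^4\setminus B$. The $4$-dimensional satellite depends only on the isotopy class of $\gamma$ in $S^4\setminus S_L$, so after this isotopy $B\subset S^4\setminus\nu(\gamma)$ and $\rho(B)$ exhibits $S_L(S_K)$ as unknotted. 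If you prefer to phrase the conclusion as an isotopy of $B$, extend the isotopy of $\gamma$ to an ambient isotopy of $S^4$ supported away from $S_L$ and apply its inverse to $B$; this repairs your step without appealing to the Whitney trick.
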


\begin{proof}
Suppose first that $K$ is doubly slice and $L$ is strongly doubly slice. Let $S_L$ be a 2-component trivial 2-link in $S^4$ such that $S_L\cap S^3=L$, where $S^3 \subset S^4$ is a standard embedding of $S^3$. Also, let $S_K$ be a trivial $2$-knot in $S^4$ such that $S_K \cap S^3 = K$. If we specify $\gamma \subset S^3 - L \subset S^4 - S_L$, then we can think of a $0$-framed satellite link $L(K)$ and a satellite $2$-link $S_L(S_K)$ such that $S_L(S_K) \cap S^3 = L(K)$. 

For a given pattern, the isotopy classes of its satellite only depends on the isotopy classes of the companion. Since $S_L$ itself can be considered as a satellite $2$-link $S_L(U)$, where $U$ is a trivial $2$-knot, and $S_K$ is isotopic to $U$, so $S_L(S_K)$ is isotopic to $S_L$, which is trivial. Therefore, $L(K)$ is a cross-section of  a trivial $2$-link, so $L(K)$ is strongly doubly slice.

Suppose that $K$ is slice and $L$ is weakly doubly slice. Let $S_L$ be a trivial $2$-knot in $S^4$ such that $S_L\cap S^3=L$. Also, let $S_K$ be a $2$-knot in $S^4$ such that $S_K \cap S^3 = K$. Note that $S_K$ does not need to be trivial, but it exists since we can double a slice disk bounded by $K$. Let $\gamma$, $L(K)$, $S_L(S_K)$ be the same objects in the previous proof. Since we assume that $L$ as a degree 0 pattern, $\gamma$ can be isotoped in $S^4 - S_L$ so that it does not intersect the ball bounded by $S_L$. In other words, $S_L$ bounds a $3$-ball in $S^4 - \gamma$. Hence, if we do the satellite operation, $S_L(S_K)$ still bounds a ball in $\nu(S_K) \subset S^4$, hence it is unknotted. Therefore, $L(K)$ is weakly doubly slice. 
\end{proof}

Recall that a link $L$ is said to be \emph{Brunnian} if any proper sublink of $L$ is an unlink. Given a 3-component Brunnian link $L$ together with a distinguished component $U\subset L$, we can consider $L- U \subset S^3 - U$ as a pattern link $P_{L,U}$ in the solid torus $D^2 \times S^1$, where we give a framing on the solid torus by the Seifert framing on $U$. Then, given any knot $K\subset S^3$, we can consider its $0$-framed satellite $P_{L,U}(K)$, which is a 2-component link in $S^3$.

\begin{lem}
$P_{L,U}(K)$ is always a boundary link, and every component of $P_{L,U}(K)$ is unknotted. Also, if $K$ is slice, then $P_{L,U}(K)$ is weakly doubly slice for both of its quasi-orientations, and if $K$ is doubly slice, then $P_{L,U}(K)$ is strongly doubly slice.
\end{lem}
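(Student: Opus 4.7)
My plan is to address the four conclusions of the lemma in order of increasing difficulty. Throughout, let $V=\overline{S^3\setminus\nu(U)}$ denote the solid-torus exterior of $U$ and let $\rho:V\to\nu(K)\subset S^3$ denote the $0$-framed satellite embedding that defines $P_{L,U}(K)=\rho(L-U)$. Every vanishing of a linking number and every unknottedness of a proper sublink of $L$ that I invoke will come from the Brunnian hypothesis.

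The unknottedness of each component is immediate: because $L$ is Brunnian, the $2$-component sublink $L_i\cup U$ is an unlink in $S^3$, so $L_i$ bounds an embedded disk $\delta_i\subset V$, and $\rho(\delta_i)$ is an embedded disk in $\nu(K)\subset S^3$ bounded by the corresponding component of $P_{L,U}(K)$.

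For the boundary-link property, my plan is to produce disjoint Seifert surfaces for $L_1$ and $L_2$ inside $V$ and push them forward by $\rho$. Since $L_1\cup L_2$ is itself a $2$-component unlink in $S^3$ (Brunnianness again), I begin with disjoint disks $D_1,D_2\subset S^3$ bounding $L_1,L_2$. Each intersection set $D_i\cap U$ has algebraic count $\mathrm{lk}(L_i,U)=0$, and a standard tubing along arcs of $U$ converts $D_i$ into a Seifert surface $F_i\subset V$ with $\partial F_i=L_i$; the tubes lie inside $\nu(U)$ and therefore preserve the vanishing of $F_i\cap L_j$ for $i\ne j$. Outside $\nu(U)$ the surfaces inherit the disjointness $D_1\cap D_2=\emptyset$, so any remaining intersection $F_1\cap F_2$ is concentrated inside $\nu(U)$, where it can be cleared by a standard sequence of surgeries on the tubes. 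The images $\rho(F_1)$ and $\rho(F_2)$ are then disjoint Seifert surfaces in $\nu(K)\subset S^3$ for the two components of $P_{L,U}(K)$, which is therefore a boundary link.

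For the doubly slice conclusions, my plan is to apply Theorem \ref{satthm} with pattern $L-U$ and distinguished loop $\gamma=U$. By Brunnianness, $L-U$ is a $2$-component unlink in $S^3$, which is strongly doubly slice and is weakly doubly slice for every orientation of its components; in addition its total winding number around $U$ is $\mathrm{lk}(L_1,U)+\mathrm{lk}(L_2,U)=0$, so $(L-U,U)$ is a degree-$0$ pattern. Theorem \ref{satthm} then yields that $P_{L,U}(K)$ is strongly doubly slice when $K$ is doubly slice and weakly doubly slice for both quasi-orientations when $K$ is slice. The main obstacle is the boundary-link step, where one must keep $F_1$ and $F_2$ inside $V$ and mutually disjoint throughout the tubing and subsequent surgeries, without creating new intersections with $L_1\cup L_2$; this works because every modification takes place inside $\nu(U)$, which is disjoint from both link components.
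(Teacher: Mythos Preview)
Your argument for unknottedness and for the doubly slice conclusions matches the paper's: both invoke the Brunnian hypothesis to see that $L_i$ bounds a disk in the solid torus, and both apply Theorem~\ref{satthm} (noting, as you do, that each $\mathrm{lk}(L_i,U)=0$ so the degree-$0$ hypothesis holds for every orientation). These parts are fine.

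The boundary-link step is where your route diverges from the paper's, and it contains a soft spot. You tube the disks $D_i$ along arcs of $U$ to push them into $V$, then assert that the resulting intersections $F_1\cap F_2$ inside $\nu(U)$ ``can be cleared by a standard sequence of surgeries on the tubes.'' Taken literally, cut-and-paste surgery along a circle of intersection between two tubes is dangerous: each tube is an annulus whose core is a meridian of $U$, the intersection circles are again meridians, and surgering along such a circle reconnects a boundary meridian coming from $D_1$ to one coming from $D_2$, producing a single surface with boundary $L_1\cup L_2$ rather than two disjoint Seifert surfaces. What actually works is not surgery but a choice made at the tubing stage: take the tubes for $D_1$ and the tubes for $D_2$ at distinct radii inside $\nu(U)$ (and nest the tubes within each family), so that $F_1$ and $F_2$ are disjoint from the outset. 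With that correction your argument goes through and yields Seifert surfaces lying entirely inside $\nu(K)$.

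The paper's argument avoids this issue altogether and is shorter: it simply takes $\Sigma_i=D_i\setminus \overline{N(U)}$, which are automatically disjoint because $D_1,D_2$ were, pushes them into $\nu(K)$ via $\rho$, and caps off the resulting boundary curves on $\partial\nu(K)$ (which are parallel $0$-framed longitudes of $K$) by disjoint parallel copies of a Seifert surface of $K$ in $S^3\setminus\nu(K)$. The trade-off is that the paper's Seifert surfaces leave $\nu(K)$, while yours stay inside it; but the paper's construction requires no delicate disjointness argument for tubes.
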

\begin{proof}
Denote the components of $P_{L,U}\subset D^2 \times S^1$ by $A,B$. Since $L$ is Brunnian and $L\cup A=L- B$ is its proper sublink, $L\cup A$ should be an unlink, which means that $A\subset D^2 \times S^1$ is a local unknot, i.e. bounds an embedded disk. Hence the satellite $A(K)$ is unknotted, and by symmetry, $B(K)$ is also unknotted. since $P_{L,U}(K)=A(K)\cup B(K)$, we deduce that $P_{L,U}(K)$ has unknotted components.

To see that $P_{L,U}(K)$ is a boundary link, we observe that $A\cup B=L- U$ is an unlink. Thus there exist disjoint embedded disks $D_1$ and $D_2$ in $S^3$, bounding $A$ and $B$, respectively, so that each of $D_i$ intersects $U$ transversely. Then the surface $\Sigma = (D_1 \cup D_2)- \overline{N(U)}$, where $N(U)$ denotes a neighborhood of $U$, has boundary $(L - U)\cup (\text{parallel copies of }U)$. Then attaching parallel copies of a Seifert surface of $K$ gives a disjoint Seifert surface of $P_{L,U}(K)$. Hence $P_{L,U}(K)$ is a boundary link. The statements about doubly sliceness of $P_{L,U}(K)$ follow directly from Theorem \ref{satthm}.
\end{proof}

\begin{exmp}
Let $L$ be the Borromean ring and $U$ be any component of $L$. Then $P_{L,U}$ is the Bing doubling pattern.
\end{exmp}

\begin{exmp}
Using 3-component Brunnian links other than the Borromean ring, we obtain more complicated patterns which can be used to produce more examples which are not Bing doubles. For example, let $L_n=R\cup G\cup B$ be the 3-component link as shown in the left of Figure \ref{newbrunnian}. Then $L_n$ is Brunnian and $P_{L_n,G}$ is the pattern shown in the right of Figure \ref{newbrunnian}.
\end{exmp}

\begin{figure}
    \centering
    \includegraphics[width=\textwidth]{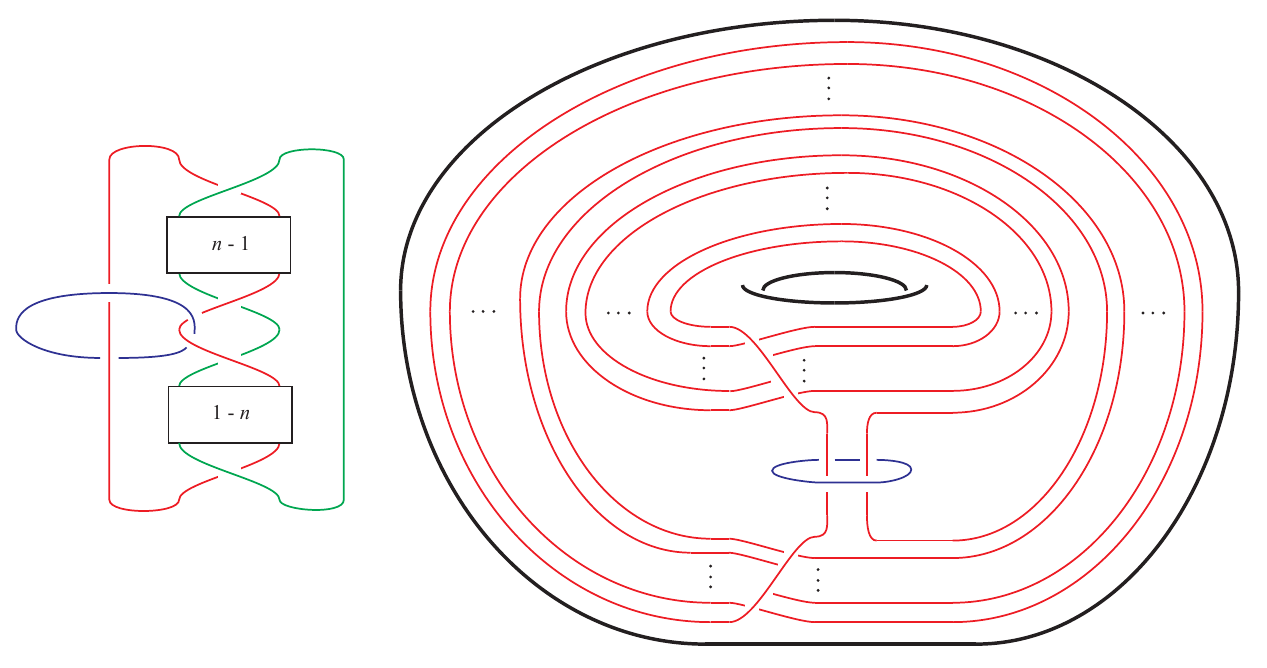}
    \caption{\textbf{Left}: The Brunnian link $L_n=R\cup G\cup B$, where the components $R$, $G$, and $B$ are drawn in red, green, and blue, respectively. Notice that $L_1$ is the Borromean ring. The twist regions are \textbf{Right}: The induced pattern $P_{L_n,G}$, which is a 2-component link in a solid torus.}
\label{newbrunnian}
\end{figure}

\section{doubly slice genus and stabilization genus}\label{secdsg}

In this section, we recall the definition of the doubly slice genus of links in $S^3$. We also introduce the stabilization genus $g_{st}$ for surface-links and for links in $S^3$ which is motivated from the fact that a large class of non-trivial surface-links can be turned into a trivial surface-link via a sequence operations called stabilizations and destabilizations. Because the set of links in $S^3$ that can be realized as the cross-sections of surface-links that admit such unlinking sequences is much larger than the set of cross-sections of trivial surface-links, the 3-dimensional counterpart, which will also be called the stabilization genus, is well-defined for a larger class of links than $g_{ds}$ is.

At the end, we prove that the absolute value of the multivariable signature is a lower bound for $g_{st}$. From now, all surface-links will be assumed to be oriented.

We begin with the well-known definition of doubly slice genus. Here, we are interested in 2-component links only. In general, however, one can define so-called the $\mu$-doubly slice genus for any $\mu$-colored $n$-component links. What we call the doubly slice genus $g_{ds}$ here is the same as the $2$-doubly slice genus $g_{ds}^2(L)$ for 2-colored, 2-component link $L$ in Conway-Orson \cite{conway2021}.

\begin{defn}
We say that a 2-component link $L$ is a \emph{cross-section} of a 2-component surface-link $\Sigma$ if $L=L_1 \cup L_2$, $\Sigma=\Sigma_1 \cup \Sigma_2$, and $L_j = \Sigma_j \cap S^3$ for each $j=1,2$. If $L$ is a cross-section of a trivial 2-component link, we define its \emph{doubly slice genus} using the formula
\[
g_{ds}(L) = \min\{ g(F_1)+g(F_2)\,\vert\, L\text{ is a cross-section of }F=F_1 \cup F_2\text{ and }F\text{ is a trivial surface-link}\},
\]
where $g(F_i)$ denotes the genus of $F_i$. If $L$ cannot be represented as such a cross-section, we set $g_{ds}(L)=\infty$.
\end{defn}

Not every links can be realized as the cross-section of a trivial surface-link. If a 2-component link $L$ were the cross-section of a trivial surface-link $F$, the intersection of a handlebody that $F$ bounds and the equatorial $S^3$ would give us a pair of disjoint Seifert surfaces of $L$. Thus, $L$ is a boundary link. The converse statement also holds:

\begin{thm}
A 2-component link $L$ has finite $g_{ds}(L)$ if and only if $L$ is a boundary link.
\end{thm}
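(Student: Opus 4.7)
The forward direction is essentially the remark made in the paragraph immediately preceding the statement: if $L = L_1 \cup L_2$ is the cross-section of a trivial surface-link $F = F_1 \cup F_2$ bounding disjoint handlebodies $H_1, H_2 \subset S^4$, then a small isotopy supported away from $S^3$ makes each $H_i$ transverse to the equatorial $S^3$. The intersection $H_i \cap S^3$ is then a compact orientable surface with boundary $F_i \cap S^3 = L_i$, and $H_1 \cap S^3$ and $H_2 \cap S^3$ are disjoint because $H_1$ and $H_2$ are. These are disjoint Seifert surfaces for $L_1$ and $L_2$, exhibiting $L$ as a boundary link.

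For the converse, the plan is to mimic the classical \emph{double of a Seifert surface} construction. Given a boundary link $L$ with disjoint Seifert surfaces $F_1, F_2 \subset S^3$, view $S^3$ as the equator of $S^4 = B^4_+ \cup_{S^3} B^4_-$. Using a bicollar $S^3 \times [-\epsilon, \epsilon]$ of $S^3$ in $S^4$, push the interior of $F_i$ slightly into $B^4_+$ while holding $\partial F_i = L_i$ fixed in $S^3$, and simultaneously push a parallel copy of $F_i$ into $B^4_-$. Gluing the two pushed copies along $L_i$ yields a closed oriented surface $\Sigma_i \subset S^4$ with $\Sigma_i \cap S^3 = L_i$ and $g(\Sigma_i) = 2 g(F_i)$ (or, more carefully, the genus of the double of $F_i$).

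It then remains to verify that $\Sigma := \Sigma_1 \cup \Sigma_2$ is a trivial $2$-component surface-link. Each $\Sigma_i$ cobounds, together with its pushing, the $3$-dimensional region $F_i \times [-\epsilon, \epsilon] \subset S^4$, which is a handlebody of genus $g(F_i)$; hence each $\Sigma_i$ is an unknotted closed surface in $S^4$. Because $F_1$ and $F_2$ are disjoint in $S^3$, for sufficiently small $\epsilon$ these two handlebody regions are themselves disjoint in $S^4$, so $\Sigma_1$ and $\Sigma_2$ bound disjoint handlebodies. Thus $\Sigma$ is trivial, and $g_{ds}(L) \le g(\Sigma_1) + g(\Sigma_2) < \infty$.

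I do not anticipate a genuine obstacle here: both directions are two sides of the standard correspondence between disjoint Seifert surfaces in $S^3$ and disjoint unknotted (doubled) surfaces in $S^4$, and the only technical care is in choosing the collar thickness small enough to preserve disjointness, together with a routine transversality argument for the forward direction.
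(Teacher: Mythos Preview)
Your proposal is correct and follows essentially the same route as the paper's proof: transversality of the bounding handlebodies with the equatorial $S^3$ for the forward direction, and doubling disjoint Seifert surfaces for the converse. One small point: in the forward direction you assert that $H_i \cap S^3$ is a surface with boundary exactly $L_i$, whereas after a generic perturbation this intersection may contain additional closed components; the paper handles this by taking the unique component of $H_i \cap S^3$ bounding $L_i$, and you should do the same.
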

\begin{proof}
Suppose that $g_{ds}(L) < \infty$. Then there exists a trivial 2-component surface-link $F=F_1 \cup F_2$ such that $L_i =F_i \cap S^3$ for each $i=1,2$, where $L=L_1 \cup L_2$. Denote the disjoint handlebodies bounded by $F_i$ as $H_i$. Without loss of generality, we may assume that $H_i$ transversely intersects the equatorial $S^3$. The intersection $H_i \cap S^3$ is always a union of disjoint embedded surfaces in $S^3$; among them, exactly one component bounds $L_i$, which we will denote as $\Sigma_i$. Then, by construction, $\Sigma_i$ is a Seifert surface of $L_i$ and $\Sigma_1 \cap \Sigma_2 = \emptyset$. Therefore $L$ is a boundary link.

Now assume that $L$ is a boundary link. Denote a pair of disjoint Seifert surfaces bounded by the components of $L$ as $\Sigma_1$ and $\Sigma_2$. Doubling each $\Sigma_i$ and gluing them along their boundaries give a 2-component trivial link in $S^4$ whose cross-section is $L$. Therefore $g_{ds}(L) < \infty$.
\end{proof}

Now we define the stabilization genus $g_{st}$ for links and for surface-links. This invariant $g_{st}$ is defined by complexity of unlinking procedures of a given surface-link via stablizations and destabilizations. Hence, we need to make it clear what do we mean by stabilizations and destabilizations in our context.

\begin{defn}
Let $F$ be a surface-link in a $4$-manifold $X$. Let $h = (D^2 \times I) \subset X$ be a  $3$-ball such that $h \cap F = D^2 \times \{0,1\}$ and $d = (D^2 \times I) \subset X$ be also a $3$-ball such that $d \cap F = \partial D^2 \times I$. A \emph{stabilization of $F$ along $h$} is a surface link $F_h = (F - h) \cup (\partial h - (h \cap F))$ and a \emph{destabilization of $F$ along $d$} is a surface link $F_d = (F-d) \cup (\partial d - (d \cap F))$. We also simply say $F_h$ is a stabilization of $F$ and $F_d$ is a destabilization of $F$. 
\end{defn}

We give the definition of an unlinking sequence. Note that we only allow stablizations to be done along the same component. 

\begin{defn}
Given a $p$-component surface-link $F$ in $S^4$, an \emph{unlinking sequence} of $F$ is a sequence $\{ F=F_1,F_2,\cdots,F_m,F_{m+1},\cdots,F_n \}$ of $p$-component surface-links such that the following conditions are satisfied. 
\begin{enumerate}
    \item $F_{i+1}$ is a stabilization of $F_i$ for each $i<m$;
    \item $F_{i+1}$ is a destabilization of $F_i$ for each $i\ge m$;
    \item $F_n$ is a trivial surface-link.
\end{enumerate}
\end{defn}

\begin{rem} Note that destabilization of a surface link is a reverse operation of a stabilization.
\end{rem}

Now we can define the stabilization genus for surface-links. 

\begin{defn}
Let $F$ be a surface-link. Denote the set of unlinking sequences of $F$ by $\mathcal{UL}_{F}$. Then we define
\[
g_{st}(F)= \min_{\{F_1,\cdots,F_n\}\in\mathcal{UL}_{L}} \max_{i=1,\cdots,n} \, g(F_i).
\]
If $F$ does not admit an unlinking sequence, i.e. $\mathcal{UL}_{F}=\emptyset$, then we set $g_{st}(F)=\infty$.
\end{defn}

For a trivial surface-link $F$ we have $g_{st}(F) = g(F)$; even if $F$ is not trivial surface-link, $F$ may admit an unlinking sequence. We may then ask under which condition, the given surface-link $F$ has an unlinking sequence. It turns out that every homologically unlinked surface-link always has unlinking sequences and, therefore, has finite $g_{st}$. 

\begin{defn}
A surface-link $F\subset S^4$ is \emph{homologically unlinked} if any component $F_0$ of $F$ is null-homologous in $S^4 - (F - F_0)$.
\end{defn}


\begin{thm}
Any homologically unlinked surface-link has finite $g_{st}$.
\end{thm}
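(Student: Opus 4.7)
The plan is to construct an unlinking sequence for $F$ in two stages: first, use stabilizations within single components to reduce $F$ to a split surface-link (one whose components lie in pairwise disjoint $4$-balls); then, once split, apply the Hosokawa--Kawauchi stabilization theorem component by component to reach a trivial surface-link as the peak of the sequence, after which destabilization down to any preferred trivial surface-link completes the sequence.

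For the first stage, I would use the homological hypothesis to produce, for each component $F_i$, a connected, smoothly embedded, orientable $3$-manifold $W_i \subset S^4 - (F - F_i)$ with $\partial W_i = F_i$. By construction the $W_i$'s avoid the other components of $F$, but in general they intersect one another transversely along closed curves. I would then eliminate these intersection circles one at a time: given $\gamma \subset W_i \cap W_j$, simple-connectedness of $S^4$ together with orientability of $W_i$ and $W_j$ allow an ambient tube addition to $F_i$ or $F_j$ whose effect on the boundary of the corresponding Seifert solid is to surger away $\gamma$. Since each such tube addition is, by definition, a stabilization of a single component of $F$, after finitely many of them the resulting surface-link is a boundary surface-link. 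Shrinking regular neighborhoods of the now-disjoint Seifert solids then isotopes each component into its own $4$-ball, making the surface-link split.

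For the second stage, each component of the resulting split surface-link is a surface-knot in its own $4$-ball, and the Hosokawa--Kawauchi theorem guarantees that after sufficiently many stabilizations of that component the resulting surface-knot becomes isotopic to the standard trivial surface-knot of the appropriate genus. Performing these stabilizations in parallel across all components produces a trivial surface-link $F_m$, which we take as the peak of our unlinking sequence; we then destabilize $F_m$ down to any preferred trivial surface-link of the same component count, establishing $g_{st}(F) < \infty$. The main obstacle, and the only nontrivial piece of geometry, lies in the first stage: one must convert transverse $1$-dimensional intersections of $3$-manifolds in $S^4$ into single stabilizations of $F$, without ever merging two different components. This calls for a Whitney-type argument, which here works because $S^4$ is simply connected and the Seifert solids are orientable, ensuring that each intersection circle can be cancelled by a single tube attached to one and only one component of $F$.
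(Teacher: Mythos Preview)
Your Stage~1 rests on a dimension miscount that breaks the argument. The Seifert solids $W_i$ are $3$-manifolds embedded in $S^4$, so a generic transverse intersection $W_i\cap W_j$ has dimension $3+3-4=2$: it is a closed orientable \emph{surface}, not a union of circles. The ``remove one intersection circle at a time by an ambient tube'' manoeuvre you describe is the classical trick for making Seifert \emph{surfaces} in a $3$-manifold disjoint (where intersections are curves) or for making surfaces in a $4$-manifold disjoint (where intersections are points); neither model transfers here. A single stabilization of $F_i$ is governed by an arc---the core of the $3$-ball $h=D^2\times I$---and its effect on a Seifert solid is to attach or carve out a $1$-handle along that arc. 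There is no mechanism by which such a local operation removes an entire $2$-dimensional component of $W_i\cap W_j$, and the Whitney-type heuristic you invoke requires complementary-dimensional submanifolds, which $3$-manifolds in a $4$-manifold are not. So as written, Stage~1 does not produce a split (or even boundary) surface-link, and Stage~2, while correct in isolation, never gets started.

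The paper avoids this difficulty by never trying to make two Seifert solids simultaneously disjoint. It treats one component at a time: choose a Seifert solid $H_1$ for $F_1$ in $S^4-(F-F_1)$, take a handle decomposition of $H_1$ with a single $0$-handle, stabilize $F_1$ along the co-core arcs of the $2$-handles (which deletes those $2$-handles from $H_1$), and then destabilize along the co-core disks of the $1$-handles (which deletes those). What remains is the $0$-handle, a ball bounded by the modified $F_1$ and disjoint from $F-F_1$; one then iterates on the remaining components. In particular no separate Hosokawa--Kawauchi step is needed, since each component already bounds a ball at the end of its own pass.
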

\begin{proof}
Choose any component $F_1\subset F$. Since $F_1$ is null homologous in $S^4 - (F - F_1)$, $F_1$ bounds a 3-manifold $H_1$ in $S^4 - (F - F_1)$. Consider a handlebody decomposition of $H_1$ with a single $0$-handle. Then we drill out a neighborhood of a co-core of every $2$-handle of $H_1$. These operations will stabilize $F_1$ to get a surface $F^1_1$. After that, we remove a neighborhood of a co-core of every $1$-handle of $H_1$. These operations will destabilize $F^1_1$ to get a surface $F^2_1$. Since we remove every other handle except $0$-handle, $F^2_1$ bounds a ball in $S^4 \setminus (F\setminus F_1)$. Therefore, $F^2_1 \cup (F - F_1)$ is a split union of $F^2_1$ and $F - F_1$. We can then iterate the process for components of $F - F_1$ to obtain a trivial link.
\end{proof}


Now we move on to the 3-dimension and introduce the stabilization genus for $p$-component links in $S^3$. The most natural definition is the following. 

\begin{defn}
Given a $p$-component link $L$, denote the set of $p$-component surface-links with finite $g_{st}$ and having $L$ as its cross-section by $\mathcal{SL}_{L}$. We define its \emph{stabilization genus} as follows:
\[
g_{st}(L) = \min_{F\in \mathcal{SL}_{L}} g_{st}(F).
\]
If $\mathcal{SL}_{L}$ is empty, then we set $g_{st}(L)=\infty$.
\end{defn}

We showed that the doubly slice genus $g_{ds}(L)$ for a link $L$ in $S^3$ has a finite value if and only if $L$ is boundary. One nice feature of $g_{st}$ is that it has finite value not only for boundary links, but also for homotopically trivial links. Recall that a link in $S^3$ is said to be \emph{homotopically trivial} if it is link-homotopic to an unlink. It is clear that every boundary link is homtopically trivial.

\begin{lem}
\label{linkingzero}
A link $L$ satisfies $g_{st}(L)<\infty$ if and only if $L$ is homotopically trivial.
\end{lem}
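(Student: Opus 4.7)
The plan is to prove the two implications separately, using the preceding theorem on homologically unlinked surface-links as the key bridge between the $3$- and $4$-dimensional pictures.

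For the sufficiency direction ($L$ homotopically trivial $\Rightarrow g_{st}(L)<\infty$), I would start from a link-homotopy of $L$ to an unlink, realized in $S^3\times I$, whose component traces are immersed annuli with pairwise disjoint \emph{images}. Capping these off at the unlink end by disjoint embedded disks yields immersed disks $D_i\subset S^3\times I$ with $\partial D_i = L_i\subset S^3\times\{0\}$ and pairwise disjoint images. Identifying $S^3\times I$ with a collar of $\partial B^4_{\text{top}}$ and pushing each $D_i$ transversely into $B^4_{\text{top}}$, I can tube away all self-intersections in the ambient $4$-manifold at the cost of extra genus, obtaining disjoint properly embedded surfaces $\Sigma_i\subset B^4_{\text{top}}$ with $\partial\Sigma_i=L_i$. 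Mirroring across $S^3$ and setting $F_i=\Sigma_i\cup_{L_i}\bar\Sigma_i$ then gives a surface-link $F=\bigsqcup_iF_i\subset S^4$ with $F\cap S^3=L$. Each $F_i$ will bound a $3$-manifold $V_i\subset S^4\setminus(F\setminus F_i)$ built by thickening $\Sigma_i$ and $\bar\Sigma_i$ in their normal directions and joining them along an annular collar of $L_i$ in $S^3$; disjointness of the $\Sigma_j$'s on each side of $S^3$ ensures $V_i\cap F_j=\emptyset$ for $j\ne i$, so $F$ is homologically unlinked, and the earlier theorem yields $g_{st}(F)<\infty$, whence $g_{st}(L)\le g_{st}(F)<\infty$.

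For the necessity direction, I would fix a witnessing surface-link $F$ with $F\cap S^3=L$ together with an unlinking sequence $F=F^{(0)},\ldots,F^{(N)}$. By definition, each move $F^{(k)}\leadsto F^{(k+1)}$ takes place in a $3$-ball $h_k\subset S^4$ that meets $F^{(k)}$ in two disks of a single component only, and hence avoids every other component. After putting $S^3$ in transverse general position with respect to each $h_k$, the intersection $h_k\cap S^3$ is a disk disjoint from all other components of the current cross-section $L^{(k)}=F^{(k)}\cap S^3$, inside which $L^{(k)}$ undergoes a local band (saddle) move, possibly accompanied by the birth or death of a small unknotted circle. Assembling these local moves produces a link-cobordism in $S^3\times[0,N]$ from $L$ to the unlink $L^{(N)}$, whose trace on each distinguished component remains in the complement of the others throughout. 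After cancelling each birth with its matching subsequent death inside a common disk, the cobordism is converted into a link concordance from $L$ to an unlink; Goldsmith's theorem that link-concordance implies link-homotopy then gives that $L$ is link-homotopic to an unlink, i.e., homotopically trivial.

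The main obstacle will be the necessity direction: verifying that each stabilization/destabilization induces nothing wilder than a band move plus a birth/death on the cross-section, and that the assembled sequence in $S^3$ can be straightened into a genuine link concordance in which the trace of each component stays in the complement of the others. Passage from this concordance to link-homotopy then rests on Goldsmith's theorem, which is the deepest external input used.
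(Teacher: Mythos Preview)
Your sufficiency argument is essentially the paper's, phrased differently. The paper avoids the detour through the ``homologically unlinked'' theorem: it observes directly that the doubled crossing-change cobordism $\bar C\circ C$, once capped off, \emph{is} a stabilization of a trivial spherical link, so the unlinking sequence is simply the destabilizations undoing those tubes. Your construction of the $3$-manifolds $V_i$ is plausible but would need more care to make precise; the paper's more direct route sidesteps this entirely.

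For necessity you correctly flag the hard direction, but the proposed argument does not go through. The $3$-ball $h_k\subset S^4$ governing a (de)stabilization meets the equatorial $S^3$, in general position, in a compact surface which may be empty, disconnected, or of positive genus; there is no reason the induced change on the cross-section should be a single band move with an optional birth/death. You also cannot perturb $S^3$ to simplify these intersections without destroying the hypothesis $F\cap S^3=L$. Finally, the endpoint is wrong: $F^{(N)}$ is a trivial surface-link, but $F^{(N)}\cap S^3$ is a priori only a boundary link (the handlebodies cut out disjoint Seifert surfaces), not an unlink, so there is no unlink toward which you are building a concordance and no place to invoke Goldsmith.

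The paper's route for this direction is completely different and bypasses the unlinking sequence. It simply notes that if $g_{st}(L)<\infty$ then $L$ is the cross-section of \emph{some} surface-link, hence each $L_i$ bounds a properly embedded surface in $B^4$ disjoint from those bounding the other components, and then asserts that a link which is not homotopically trivial cannot bound such a disjoint family. For $2$-component links, which is the case actually used in the paper, this last step is immediate: homotopic triviality is equivalent to vanishing linking number, and disjoint bounding surfaces force the linking number to be zero.
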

\begin{proof}
Suppose that $L$ is homotopically trivial. Since $L$ is link homotopic to an unlink, tracking the crossing changes gives a cobordism $C$ from an unlink $U$ to $L$. See, for instance, \cite[Figure. 2]{audoux2014} for the realization.  Since we are not allowed to perform crossing changes between different components, the doubled cobordism $\bar{C}\circ C$ is isotopic to a stabilization of $U\times I$. Capping it of by boundary-parallel disks gives a surface-link $F$ which has $L$ as its cross-section. By construction, $F$ is isotopic to a stabilization of a trivial spherical link, so it admits an unlinking sequence. Therefore $g_{st}(L)$ is finite.
Now suppose that $L$ is not homotopically trivial. Then $L$ does not bound a strong slice surface in $B^4$, so it does not arise as a section of a surface-link in $S^4$. Therefore $g_{st}(L)=\infty$.
\end{proof}

\begin{rem}
It is obvious by definition that $L$ is strongly doubly slice if and only if $g_{ds}(L)=0$ if and only if $g_{st}(L)=0$. Furthermore, we always have $g_{st}(L) \le g_{ds}(L)$, since trivial surface-links admit (trivial) unlinking sequences. However, we will see that the lower bound of $g_{ds}$ coming from multivariable link signature is actually a lower bound of $g_{st}$. 
\end{rem}

The remaining part of this section is devoted to proving that the absolute value of the multivariable signature gives a lower bound of $g_{st}$. To this end, we recall the construction of the multivariable singnature. 

Let $X$ be a CW-complex and let $\phi=(\phi_1,\phi_2):\pi_1(X) \to \mathbb{Z}^2$ be a homomorphism. Choose an element $\omega=(\omega_1, \omega_2) \in \mathbb{T}_{\ast}^{2}:=(S^1\setminus\{1\})^2$. Then $\pi_1(X)$ acts on $\mathbb{C}$ by
\[
\gamma \cdot z = z \omega_1 ^{\phi_1(\gamma)}\omega_2 ^{\phi_2(\gamma)}
\]
giving $\mathbb{C}$ a structure of $\pi_1(X)$-module.  We use the notation $\mathbb{C}_\omega$ when we regard $\mathbb{C}$ as the $\pi_1(X)$-module with respect to the chosen $\omega$. Note that, since $\mathbb{Z}^2$ is abelian, we can view $\mathbb{C}_\omega$ as a $H_1(X)$-module.

Recall that the homology of $X$ with twisted coefficient $\mathbb{C}_\omega$, $H_{\ast}(X;\mathbb{C}_\omega)$, is defined by the homology of the chain complex
\[
\cdots \to C_2(\tilde{X}) \otimes_{\Lambda} \mathbb{C}_\omega \to C_1(\tilde{X})\otimes_{\Lambda} \mathbb{C}_\omega \to C_0(\tilde{X})\otimes_{\Lambda} \mathbb{C}_\omega \to 0
\]
where $\Lambda=H_1(X)$ and where $C_{\ast}(\tilde{X})$ is the usual (cellular) chain complex of the universal abelian cover $\tilde{X}$ of $X$ equipped with the action of $\Lambda$ by deck transformations.

Throughout this paper, we will concentrate on the case when $X$ is the exterior $X_F$ of a 2-component surface-link $F=\Sigma_1\cup \Sigma_2$ in $S^4$. Unless otherwise stated, we will use the homomorphism $\phi:\pi_1(X_F) \to \mathbb{Z}^2$ given by $\gamma \mapsto (\operatorname{lk}(\gamma, \Sigma_1), \operatorname{lk}(\gamma, \Sigma_2))$ to define $\mathbb{C}_\omega$.  

The proof of the following lemma is inspired by \cite{cochran2003} and \cite{conway2021}. 
\begin{lem}
\label{lem1}
Let $F_0$ be a 2-component trivial surface-link and $F$ be a surface-link obtained from $F$ by stabilizing its components. Let $X_F$ be the exterior of $F$. Then for any $\omega\in \mathbb{T}_{\ast}^{2}$, we have $\mathbf{dim}_{\mathbb{C}}\, H_1 (X_F,\mathbb{C}_{\omega}) \le 1$.
\end{lem}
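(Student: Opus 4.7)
My approach is to reduce the problem to the trivial case $F=F_0$ by identifying each stabilization with an arc-drilling operation on the exterior, and arguing that drilling such an arc does not change twisted $H_1$.

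First, I would identify, for each stabilization $F_{i-1}\rightsquigarrow F_i$ carried out along a 3-ball $h=D^2\times I$ with $h\cap F_{i-1}=D^2\times\{0,1\}$, the core arc $\alpha_i=\{0\}\times I$, whose two endpoints lie on $F_{i-1}$. Removing small open neighborhoods of those endpoints gives a properly embedded arc $\gamma_i\subset X_{F_{i-1}}$, and a direct comparison of tubular neighborhoods identifies $X_{F_i}$ with $X_{F_{i-1}}\setminus \nu(\gamma_i)$. Iterating reduces $X_F$ to $X_{F_0}$ with a disjoint family of properly embedded arcs drilled out. A small amount of care is needed to verify that the local system $\mathbb{C}_\omega$ on $X_{F_i}$ is the pullback of the one on $X_{F_{i-1}}$: stabilization modifies each surface component only by a 2-dimensional boundary (the tube), so the linking number homomorphisms on $\pi_1$ are intertwined by the inclusion, and the new meridian of $\gamma_i$ bounds a disk in $X_{F_{i-1}}$ disjoint from $F_{i-1}$ and hence acts trivially under $\omega$.

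The heart of the argument is a single-step calculation via the long exact sequence of the pair $(X_{F_{i-1}},X_{F_i})$ with coefficients $\mathbb{C}_\omega$. Excision reduces the relative terms to $H_\ast(\nu(\gamma_i),\partial\nu(\gamma_i)\cap X_{F_i};\mathbb{C}_\omega)$, and this pair is diffeomorphic to $(D^3\times I,S^2\times I)$; because $\nu(\gamma_i)$ is a 4-ball, the local system restricts trivially, so the relative homology collapses to $H_\ast(D^3,S^2;\mathbb{C})$, which is $\mathbb{C}$ in degree $3$ and zero in degrees $0,1,2$. Feeding this into the relevant portion of the long exact sequence yields $H_1(X_{F_i};\mathbb{C}_\omega)\cong H_1(X_{F_{i-1}};\mathbb{C}_\omega)$, and an induction on the number of stabilizations reduces the claim to the trivial case $F=F_0$.

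Finally, I would compute $H_1(X_{F_0};\mathbb{C}_\omega)$ directly. The complement of a split trivial 2-component spherical link has $\pi_1\cong F_2$ generated by the two meridians, so it admits a CW model with $1$-skeleton $S^1\vee S^1$ whose 2-cells all attach along null-homotopic loops (since $F_2$ is free). The twisted cellular boundary of the universal abelian cover becomes $\mathbb{C}^2\to\mathbb{C}$, $(a,b)\mapsto a(\omega_1-1)+b(\omega_2-1)$, which is surjective for $\omega\in\mathbb{T}_\ast^2$ and so has a one-dimensional kernel, while $\partial_2\otimes 1$ vanishes by null-homotopy. Thus $\dim_{\mathbb{C}}H_1(X_{F_0};\mathbb{C}_\omega)=1$, establishing the bound. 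I expect the only nontrivial step to be the precise arc-drilling identification together with the compatibility of local systems; the excision and exact-sequence manipulations themselves are routine.
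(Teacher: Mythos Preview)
Your argument is correct and in fact proves the sharper statement $\dim_{\mathbb{C}} H_1(X_F;\mathbb{C}_\omega)=\dim_{\mathbb{C}} H_1(X_{F_0};\mathbb{C}_\omega)=1$, not merely an upper bound. The paper takes a different, more algebraic route: rather than tracking the exterior through each stabilization, it observes that $\pi_1(X_F)$ is a quotient of $\pi_1(X_{F_0})\cong\mathbb{F}_2$ (indeed, your arc-drilling plus van~Kampen shows it is \emph{equal} to $\mathbb{F}_2$, so the paper's normal subgroup $K$ is trivial), and then invokes the general principle that a $\pi_1$-surjective map $S^1\vee S^1\to X_F$ induces a surjection on $H_1(\,\cdot\,;\mathbb{C}_\omega)$. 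Both proofs finish with the identical computation of $H_1(S^1\vee S^1;\mathbb{C}_\omega)$. Your approach is more geometric and self-contained, needing no outside lemma about twisted coefficients; the paper's is shorter to state once that lemma is in hand and only requires that meridians normally generate, not a step-by-step identification of exteriors. Two small remarks: your phrase ``the new meridian of $\gamma_i$'' is slightly off, since a codimension-$3$ arc has meridional $S^2$ rather than a circle---what you are really using is that $\partial\nu(\gamma_i)\cap X_{F_i}\simeq S^2\times I$ has trivial $H_1$, so the inclusion is an $H_1$-isomorphism intertwining the linking homomorphisms; and your base case assumes $F_0$ spherical, which is harmless since any trivial surface-link is itself a stabilization of a trivial spherical link (or one can note directly that its exterior still has $\pi_1\cong\mathbb{F}_2$, hence is homotopy equivalent to $S^1\vee S^1$).
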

\begin{proof}
Since $F$ is obtained by stabilization of a 2-component trivial surface-link $F_0$, we see that $\pi_1(X_F)\simeq \pi_1(X_{F_0})/K\simeq \mathbb{F}_2/K$ for some normal subgroup $K$ contained in the commutator subgroup.  Consider the space $Y:=S^1 \vee S^1$ and a continuous map $f:Y\to X_{F}$ that sends each circle to a meridian loop that normally generates $\pi_1(X_F)$. This yields the induced map $f_{\ast}:H_1(Y;\mathbb{C}_\omega)\to H_1(X_F ;\mathbb{C}_\omega)$. Clearly  $f_{\ast}$ is surjective. Hence it is enough to show that $\mathbf{dim}_{\mathbb{C}} H_1(Y;\mathbb{C}_\omega) \le 1$. In fact, we will show that $\mathbf{dim}_{\mathbb{C}} H_1(Y;\mathbb{C}_\omega) = 1$.  

By definition, $H_1(Y;\mathbb{C}_\omega)$ can be computed from the chain complex
\[
0\to C_1(\tilde{Y})\otimes_{\mathbb{Z}^2}\mathbb{C}_\omega \overset{\partial_1}{\to} C_0(\tilde{Y})\otimes_{\mathbb{Z}^2} \mathbb{C}_\omega\to 0. 
\]
Lift the 0-cell $p$ and 1-cells $x_1, x_2$ of $Y$ to the 0-cell $\tilde{p}$ and 1-cells $\tilde{x}_1, \tilde{x}_2$ of the universal abelian cover $\tilde{Y}$. Then we know that  $C_1(\tilde{Y})\otimes_{\mathbb{Z}^2}\mathbb{C}_\omega = \mathbb{C}\tilde{x}_1 \oplus \mathbb{C} \tilde{x}_2$, and that $C_0(\tilde{Y})\otimes_{\mathbb{Z}^2} \mathbb{C}_\omega = \mathbb{C} \tilde{p}$. Under this identification, the differential $\partial_1$ can be written as $z \tilde{x}_i \mapsto (f(x_i)\cdot z-1)\tilde{p}$. Since  $(\operatorname{lk}(\gamma, \Sigma_1), \operatorname{lk}(\gamma, \Sigma_2))\ne (0,0)$ for any nontrivial $\gamma\in \pi_1(X_F)$, we see that $\partial_1$ is surjective. This proves that $\mathbf{dim}_{\mathbb{C}}\mathbf{ker}(\partial_1) = \mathbf{dim}_{\mathbb{C}} H_1(Y;\mathbb{C}_\omega) =1$. 
\end{proof}

The following theorem and its proof is a slight generalization of \cite{conway2021}. For the sake of completness, we include the full proof. Note that, since $g_{st}(L) \le g_{ds}(L)$, we recover \cite[Theorem 3.4]{conway2021} for $\mu=2$ case. 

\begin{thm}
Let $L$ be a homologically unlinked 2-component link and $\sigma_L$ be its multivariable signature. Then $\vert \sigma_L(\omega) \vert \le g_{st}(L)$ for any $\omega\in \mathbb{T}_{\ast} ^{2}$.
\end{thm}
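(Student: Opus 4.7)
The plan is to adapt the Conway--Orson proof of $\vert\sigma_L(\omega)\vert\le g_{ds}(L)$, using the preceding lemma as the replacement for the triviality hypothesis that drove their homology vanishing. First, fix $F\in\mathcal{SL}_L$ realizing $g_{st}(L)=G$, together with an unlinking sequence $\{F_1=F,F_2,\ldots,F_m,\ldots,F_n\}$ whose apex $F_m$ attains the maximum genus $G$. Reversing the destabilization half of the sequence exhibits $F_m$ as a stabilization of the trivial 2-component surface-link $F_n$, so the preceding lemma gives
\[
\dim_{\mathbb{C}} H_1(X_{F_m};\mathbb{C}_\omega)\le 1.
\]

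Next, cut $S^4$ along the equatorial $S^3$ into the two 4-balls $B^4_+$ and $B^4_-$, let $F_\pm = F\cap B^4_\pm$ (properly embedded surfaces with $\partial F_\pm = L$ and $g(F_+)+g(F_-)=g(F)$), and set $W_\pm = B^4_\pm - \nu(F_\pm)$. These are compact 4-manifolds with $X_L\subset \partial W_\pm$ and $X_F = W_+\cup_{X_L} W_-$, and the linking-number homomorphism $\phi$ extends $\mathbb{C}_\omega$ coherently to each piece. A Wall non-additivity calculation, essentially that of Conway--Orson, then produces a signature-defect identity of the form
\[
\sigma_L(\omega) = \sigma_\omega(W_+) - \sigma_\omega(W_-) + \varepsilon,
\]
where $\vert\sigma_\omega(W_\pm)\vert\le \dim_{\mathbb{C}} H_2(W_\pm;\mathbb{C}_\omega)$ and $\varepsilon$ is a Maslov-type correction controlled by $\dim H_1(X_F;\mathbb{C}_\omega)$. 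A Mayer--Vietoris argument then bounds $\dim H_2(W_\pm;\mathbb{C}_\omega)$ by $g(F_\pm)$ up to a correction that is again controlled by $\dim H_1(X_F;\mathbb{C}_\omega)$.

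The crucial remaining input is a uniform bound $\dim H_1(X_F;\mathbb{C}_\omega)\le 1$. We extract this from the unlinking sequence by propagation: the apex case is handled by the preceding lemma, and each stabilization or destabilization amounts to a local surgery inside a 4-ball in $S^4$, so the twisted first homology of the complement changes in a controlled way and the bound $\le 1$ is inherited by every $X_{F_i}$, and in particular by $X_F$. Assembling all ingredients yields $\vert\sigma_L(\omega)\vert\le g(F_+)+g(F_-)=g(F)\le g(F_m)=G=g_{st}(L)$. The main technical hurdle is the propagation step: while the apex bound is immediate from the preceding lemma, verifying that it survives each stabilization and destabilization requires a careful Mayer--Vietoris comparison of $X_{F_i}$ and $X_{F_{i+1}}$ in a neighborhood of the stabilizing 3-ball. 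This propagation is also where our argument genuinely departs from Conway--Orson.
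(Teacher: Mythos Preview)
Your propagation step does not work, and it is the load-bearing part of the argument. Recall that a single stabilization of a surface-link $F_i$ to $F_{i+1}$ amounts to attaching a $2$-handle to the complement, so it induces a surjection $\pi_1(X_{F_i})\twoheadrightarrow\pi_1(X_{F_{i+1}})$; since $H_1(-;\mathbb{C}_\omega)$ of a space equals the group homology $H_1(\pi_1(-);\mathbb{C}_\omega)$, this yields $\dim H_1(X_{F_i};\mathbb{C}_\omega)\ge \dim H_1(X_{F_{i+1}};\mathbb{C}_\omega)$. Thus the bound $\dim H_1(X_{F_m};\mathbb{C}_\omega)\le 1$ at the apex gives you \emph{no} upper bound on $\dim H_1(X_{F};\mathbb{C}_\omega)$ for $F=F_1$: the inequality runs the wrong way. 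Indeed, your chain $\vert\sigma_L(\omega)\vert\le g(F_+)+g(F_-)=g(F)$ would prove the much stronger statement $\vert\sigma_L(\omega)\vert\le g(F)$ for every $F\in\mathcal{SL}_L$, which is false in general (take $F$ to be the double of disjoint slice disks for a strongly slice but not strongly doubly slice link with nonvanishing $\sigma_L$; then $g(F)=0$).

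The paper avoids this entirely by never returning to $F_1$. It works directly with the apex $F_m$, to which the preceding lemma applies verbatim since $F_m$ is a stabilization of the trivial link $F_n$. The price is that the equatorial cross-section of $F_m$ is not $L$ but $L\sqcup U$ for some $2$-colored unlink $U$ (the stabilizing tubes intersect the equator in extra unknotted components). One then runs the Conway--Orson Euler-characteristic and Mayer--Vietoris computation for $X_{F_m}=W_A\cup_{X_{L\sqcup U}}W_B$, uses $b_1^\omega(X_{F_m})\le 1$ directly, and obtains $\vert\sigma_{L\sqcup U}(\omega)\vert\le g(F_m)=g_{st}(L)$. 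The split-union formula $\sigma_{L\sqcup U}=\sigma_L+\sigma_U=\sigma_L$ finishes the proof. So the missing idea is: change the cross-section rather than the surface.
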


\begin{proof}
By definition, there exist surface-links $F_0$ and $F$ such that $F$ is common stabilization of $F_0$ and some trivial 2-component link, and $g(F)=g_{st}(L)$. Stabilizations are performed on $F_0$ along arcs whose endpoints lie on the same component of $F_0$; by perturbing the arcs so that they intersect transversely with the equatorial $S^3$, we see that there exists a 2-coloring on some unlink $U$ so that the 2-colored link $L\sqcup U$ is a cross-section of $F$. For simplicity, we write $F=A\cup_{L\sqcup U} B$, and denote the exteriors of $A$ and $B$ inside the 4-ball by $W_A$ and $W_B$ so that $X_F:= S^4 \setminus \nu(F) = W_A \cup_{X_{L\sqcup U}} W_B$.

For a given CW-complex $X$, let $b_i ^\omega(X) := \mathbf{dim}_{\mathbb{C}}H_i (X;\mathbb{C}_\omega)$ and   $\chi^\omega(X) := \sum_{i\ge 0} (-1)^i b_i ^\omega (X)$. We know that $\chi^\omega(X)$ coincides with the usual Euler characteristic $\chi(X):=\sum_{i\ge 0} (-1)^i \mathbf{dim}_{\mathbb{C}} H_i(X;\mathbb{C})$ provided $X$ is of finite type. In particular, $\chi^\omega (X_F) = \chi(X_F)$. 

By the  Mayer-Vietoris sequence, we know $\chi(X_F)=\chi(F\times S^1)-\chi(\nu(F))+\chi(S^4)$. Since $\chi(F\times S^1 )=0$, we get 
\[
\chi^\omega(X_F)= 2-\chi(F)=2-(4-2g_{st}(L))=2g_{st}(L)-2.
\]
Note also that $\chi^\omega(X_{L \sqcup U}) = \chi(X_{L \sqcup U})=0$.

By the Mayer-Vietoris sequence again, we have
\begin{align*}
\chi^ \omega(X_F) =2g_{st}(L)-2 & = \chi^ \omega (W_A) + \chi^ \omega(W_B) -\chi^\omega (X_{L\sqcup U})\\
&=\sum_{i=1}^3 (-1)^i (b^\omega _i (W_A)  +b^\omega_ i (W_B)).
\end{align*}
Due to \cite[Proposition 3.3]{conway2021}, we know that 
\[
|\sigma_{L\sqcup U}(\omega)| \le b_2 ^\omega(W_{\ast})+b^\omega _1 (W_{\ast}) - b_3 ^\omega (W_{\ast})-b_1^\omega(X_{L\sqcup U})
\]
where $*$ is either $A$ or $B$. This yields
\[
2g_{st}(L)-2 \ge 2|\sigma_{L\sqcup U}(\omega)| -2 b_1 ^\omega (W_A) -2 b_1 ^\omega(W_B) +2b_1 ^\omega(X_{L\sqcup U}). 
\]
From a part of the Mayer-Vietoris sequence,
\[
0\to\mathbf{ker}(\psi)\to H_1(X_{L\sqcup U};\mathbb{C}_\omega) \overset{\psi}{\to} H_1(W_A;\mathbb{C}_\omega) \oplus H_1 (W_B;\mathbb{C}_\omega) \to H_1(X_F;\mathbb{C}_\omega) \to 0,
\]
we observe that $b_1 ^\omega(X_F)=b_1 ^\omega(W_A)+b_1 ^\omega (W_B) -b_1 ^\omega(X_{L\sqcup U})+\mathbf{dim}_{\mathbb{C}}\mathbf{ker}(\psi)$. By Lemma \ref{lem1}, we know that $b_1 ^\omega(X_F)\le 1$. Thus, 
\[
 b_1 ^\omega(W_A)+b_1 ^\omega (W_B) -b_1 ^\omega(X_{L\sqcup U})\le 1.
\]
Therefore, 
\[
g_{st}(L) \ge |\sigma_{L\sqcup U}(\omega)|.
\]

It remains to compute $\sigma_{L\sqcup U}(\omega)$. Split union formula gives $\sigma_{L\sqcup U}(\omega)=\sigma_L (\omega)+\sigma_{U}(\omega)$. Also, since any 2-colored unlinked is (colored) doubly slice, we know from \cite{conway2021} that $\sigma_{U}(\omega)=0$. Therefore we get $\vert\sigma_L (\omega)\vert \le g_{st}(L)$.
\end{proof}

\section{Proof of the main theorem}

\subsection{Proof of Theorem \ref{mainthm1}}
This subsection is devoted to the proof of Theorem \ref{mainthm1}. Our key players are Bing doubles and its branched double cover along an unknot component. 

We give a brief summary of the main ingredients of our proof. First of all, on one hand, we prove that $g_{st}(B(K))\le 1$ for any slice knot $K$ where $B(K)$ denotes the Bing double of $K$. On the other hand, we will prove that $2|\sigma_K (\omega)|\le g_{ds}(B(K))$ for any slice knot $K$, where $\sigma_K(\omega)$ is the multivariable signature. This bound is useful because the multivariable signature is not a concordance invariant for some value of $\omega$. Indeed, we will use the slice knot $K=\sharp^n 8_{20}$ and $\omega=e^{\pi i/3}$ so that $\sigma_K(\omega)=n$. These establish Theorem \ref{mainthm1}. 

As promised, we first show that the stabilization genus of the Bing double of a slice knot is at most 1. This is a consequence of the following more general lemma. 
\begin{lem}
For any $2$-knot $F$, $g_{st}(B(F)) \le 1$.
\end{lem}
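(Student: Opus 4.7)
The plan is to produce an unlinking sequence for $B(F)$ of length two, consisting of a single stabilization followed by a single destabilization, passing through a surface-link of total genus~$1$; this will establish $g_{st}(B(F)) \le 1$. Write $B(F) = A \cup B$, and recall that $B(F)$ is the satellite surface-link obtained from a pattern $(P,\gamma)$ with $P = A_0 \cup B_0 \subset S^4$ the $4$-dimensional Bing doubling pattern, via a diffeomorphism $\rho \colon \overline{S^4 \setminus \nu(\gamma)} \cong S^2 \times D^2 \to \nu(F) \subset S^4$, so $A = \rho(A_0)$ and $B = \rho(B_0)$. By the Brunnian property of $P$, each of $A_0, B_0$ bounds a $3$-ball in $S^4$ disjoint from the other component, so $A$ and $B$ are individually unknotted $2$-spheres in $S^4$.

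The main step is to exhibit a $3$-ball $h_0 \subset S^2 \times D^2$, disjoint from $\gamma$, attached to $A_0$ as a $1$-handle in the sense that $h_0 \cap P = D^2 \times \{0,1\} \subset A_0$, whose effect is to undo the Brunnian clasp between $A_0$ and $B_0$: after stabilizing, $(A_0)_{h_0} \cup B_0$ is a split surface-link in $S^2 \times D^2$, with $(A_0)_{h_0}$ an unknotted torus in $S^4$. This $h_0$ is the $4$-dimensional analog of the classical fact that a single band attached to one component of the $3$-dimensional Bing pattern in $D^2 \times S^1$ suffices to unlink it; the cleanest realization is to view the $4$D Bing pattern as the spin of the $3$D Bing pattern, and then spin the $3$D unlinking band along the same axis to turn it into a $3$-ball.

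Pushing forward via $\rho$, we obtain a stabilizing $3$-ball $h := \rho(h_0) \subset \nu(F)$ for $A$, and the stabilized surface-link $A_h \cup B$ is a split union of the unknotted torus $A_h$ with the unknotted sphere $B$. Since $A_h$ is an unknotted torus in $S^4$, it bounds a standardly embedded solid torus and admits a destabilizing $3$-ball $d$; the destabilization $(A_h)_d$ is an unknotted sphere split from $B$, giving the $2$-component trivial surface-link $(A_h)_d \cup B$. Consequently the sequence
\[
B(F) = A \cup B \ \longrightarrow\ A_h \cup B\ \longrightarrow\ (A_h)_d \cup B
\]
is an unlinking sequence with successive total genera $0, 1, 0$ and terminating at a trivial link, yielding $g_{st}(B(F)) \le 1$.

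The principal obstacle is the construction of $h_0$: verifying that one $1$-handle suffices to undo the Brunnian clasp in dimension $4$. The spin description makes this essentially automatic, but for an abstractly presented $4$D Bing pattern one can argue locally by thickening the $3$D unlinking band in the transverse $D^2$-direction of $S^2 \times D^2 \cong \overline{S^4 \setminus \nu(\gamma)}$ and then checking directly that the resulting $3$-ball indeed splits $(A_0)_{h_0}$ from $B_0$ in $S^2 \times D^2$.
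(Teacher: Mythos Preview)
Your strategy—one stabilization followed by one destabilization, passing through total genus $1$—is exactly the paper's. The difference is packaging: the paper works directly with a banded unlink diagram of $B(F)$ for arbitrary $F$ and tracks the modified component through the two moves, identifying it at the end as an untwisted tube of two parallel copies of $F$, which visibly bounds a $3$-ball in $S^4$. You instead work in the pattern $S^2\times D^2$ and push forward via $\rho$.

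There is, however, a genuine gap in your push-forward step. You assert that $(A_0)_{h_0}$ is an unknotted torus in the \emph{source} $S^4$ (where the companion is the trivial circle $\gamma$) and then conclude that $A_h=\rho((A_0)_{h_0})$ is unknotted in the \emph{target} $S^4$ (where the companion is the possibly knotted $F$). This does not follow: $\rho$ only identifies $\overline{S^4\setminus\nu(\gamma)}\cong S^2\times D^2$ with $\nu(F)$, and the complements of these solid tubes in their respective $S^4$'s differ. What you actually need is that $(A_0)_{h_0}$ bounds a genus-one handlebody \emph{inside} $S^2\times D^2$ (and disjoint from $B_0$); only then does $A_h$ bound a handlebody in $\nu(F)\subset S^4$ and hence become unknotted regardless of $F$. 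Your spin sketch could in principle be upgraded to show this, but as written you only verify splitness in $S^2\times D^2$, never the existence of the bounding handlebody there. The paper sidesteps the issue entirely by never leaving the target $S^4$: it performs both moves on $B(F)$ itself and reads off the bounding $3$-ball directly from the diagram.
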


\begin{proof}
We can consider a banded unlink diagram of $B(F)$ as in the top left of Figure \ref{bing}. Then we can stabilize $B(F)$ and then destabilize it to obtain a $2$-component trivial link as in Figure \ref{bing}. 
\end{proof}

\begin{figure}[!hb]
    \centering
    \includegraphics[width=\textwidth]{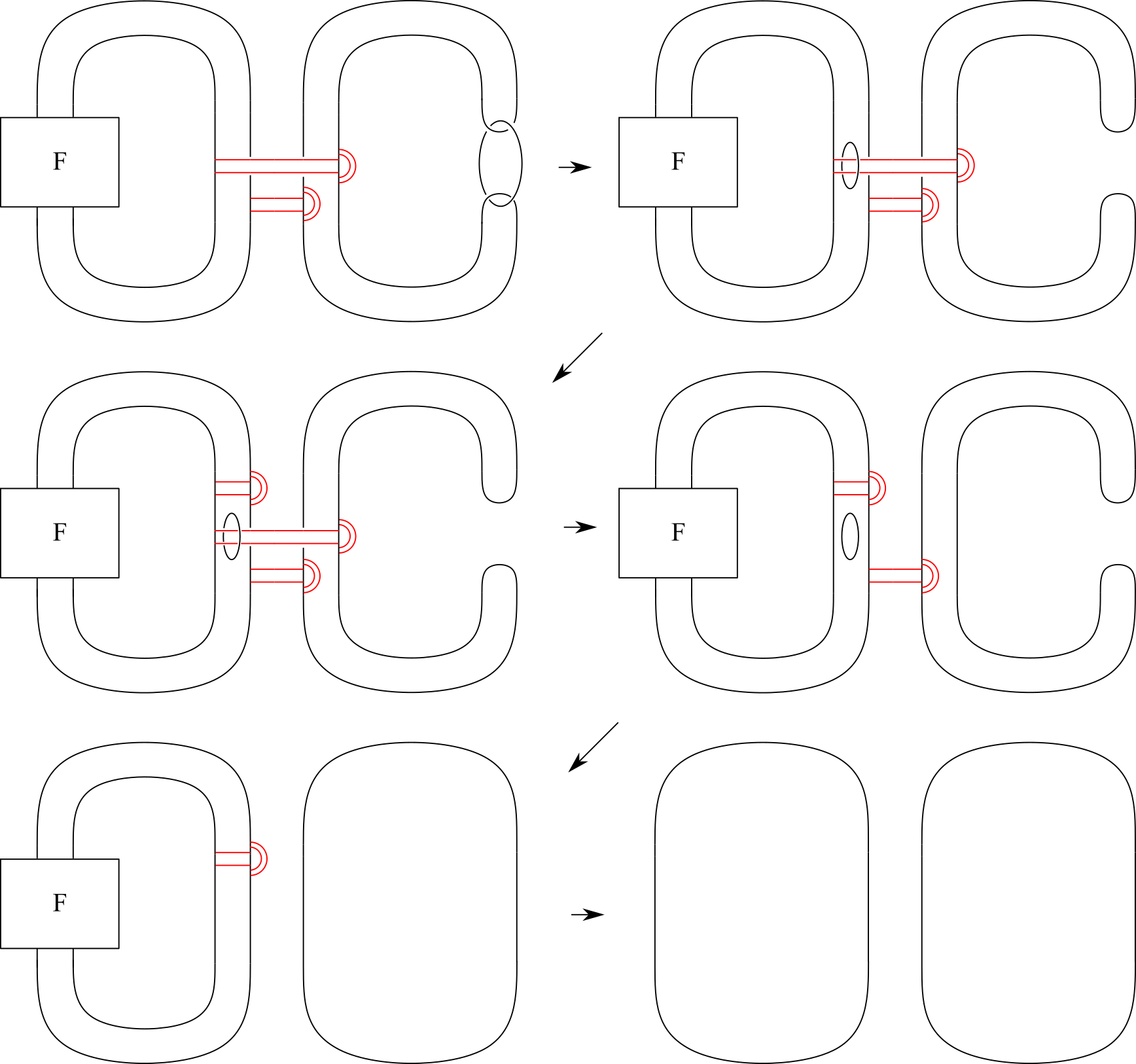}
    \caption{\textbf{Top left}: A diagram of a Bing double of $F$. Note that the left side of the figure is consists of two parallel copies of $F$. \textbf{Top right}: We can slide one component, which represented by an unknot, through the diagram so that it linked with a band as in the figure. \textbf{Middle left}: Stabilize one of the components. \textbf{Middle right}: Destabilize it in a different location. \textbf{Bottom left}: Two components are split. \textbf{Bottom right}: The left component of previous figure is an untwisted tubing of two parallel copies of $F$, so it bounds a 3-ball, which means it is unknotted. Hence we get a trivial $2$-component link. }
\label{bing}
\end{figure}

\begin{cor}
For any slice knot $K$, we have $g_{st}(B(K)) \le 1$.
\end{cor}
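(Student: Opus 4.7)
The plan is to deduce this directly from the preceding lemma combined with the general satellite framework of Section~4. The idea is that if $K$ bounds a smooth slice disk $D \subset B^4$, then doubling $D$ across the equatorial $S^3 \subset S^4$ produces a (possibly knotted) $2$-knot $F \subset S^4$ with $F \cap S^3 = K$. So $K$ occurs as the cross-section of a $2$-knot, and this is the only feature of $K$ needed.

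Next, I would argue that the Bing-doubling satellite construction is compatible with cross-sections: if $F \cap S^3 = K$, then the analogous satellite $B(F) \subset S^4$ (formed using the Bing-doubling pattern in a tubular neighborhood of $F$, just as $B(K)$ is formed in a tubular neighborhood of $K$) satisfies $B(F) \cap S^3 = B(K)$. This is immediate from the definition of the satellite operation in Definition~\ref{satellitedefn}, since the equatorial $S^3$ meets the product tubular neighborhood of $F$ in the tubular neighborhood of $K$, and the pattern is the same in both slices.

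Having placed $B(K)$ as a cross-section of $B(F)$, the previous lemma gives $g_{st}(B(F)) \le 1$, and the definition
\[
g_{st}(B(K)) = \min\{ g_{st}(\Sigma) \mid \Sigma \cap S^3 = B(K),\ \Sigma \in \mathcal{SL}_{B(K)} \}
\]
immediately forces $g_{st}(B(K)) \le g_{st}(B(F)) \le 1$, which is the claim.

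There is essentially no obstacle here; the only mild point to verify is the compatibility of the satellite operation with taking equatorial cross-sections, which is a direct consequence of the construction and does not require any additional hypothesis on the companion. Thus the corollary is truly a one-line consequence of the lemma together with the fact that every slice knot is the cross-section of some $2$-knot.
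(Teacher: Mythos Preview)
Your proposal is correct and matches exactly the reasoning the paper leaves implicit: the corollary is stated without proof immediately after the lemma, and the intended argument is precisely to double a slice disk for $K$ to a $2$-knot $F$ with $F\cap S^3=K$, note (as in the proof of Theorem~\ref{satthm}) that $B(F)\cap S^3=B(K)$, and then invoke the lemma together with the definition of $g_{st}$ for links.
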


Now we give a lemma that will be used in our proof of Theorem \ref{mainthm1}. This lemma relates the multivariable signature of a link $L\subset S^3$ that bounds disks in a 4-manifold $W$ (not necessarily the 4-ball) and the signatures of the ambient manifold $W$.

Now we are ready to prove Theorem \ref{mainthm1}.

\begin{proof}[Proof of Theorem \ref{mainthm1}]
Given a slice knot $K$, suppose that $B(K)=L_1 \cup L_2$ is the cross-section of a trivial link $F=F_1 \cup F_2$, with genera $g(F_1)=g_1$ and $g(F_2)=g_2$. Consider the branched cover of $S^4$ along $F_1$. Since $F_1$ is an unknotted surface of genus $g_1$, the branched cover is $\sharp ^{g_1} (S^2 \times S^2)$. Since $L_1$ is an unknot in $S^3$, the preimage of the equatorial $S^3$ on the branched double cover is again $S^3$, which splits it as a connected sum of two 4-manifolds, say $A$ and $B$. Since we already know that $B(K)$ is a 2-component boundary link with unknotted (thus doubly slice) components, which is weakly doubly slice for both of its quasi-orientations, it remains to choose $K$ so that $g_1 +g_2$ is arbitrarily large for any choice of $F$.

The preimage of the component $L_2$ of $B(K)$ is a 2-component link; choose one component $T$. The slice-surfaces of $L_2$ which glue together to form $F_2$ lift to slice-surfaces $F_A$ (in $A$) and $F_B$ (in $B$) of $T$. Since $F_2$ bounds a handlebody of genus $g_2$ which is disjoint from a handlebody of genus $g_1$ bounded by $F_1$, we see that $F_T =F_A \cup_T F_B$ also bounds a handlebody of genus $g_2$ in $\sharp ^{g_1} (S^2 \times S^2)$. This implies that $F_T$ is null-homologous; since the homology class of $F_T$ is represented as the image of the pair of homology classes of $F_A$ and $F_B$ under the gluing isomorphism 
\[
H_2 (A;\mathbb{Z}) \oplus H_2 (B;\mathbb{Z}) \xrightarrow{\simeq} H_2 (\sharp ^{g_1} (S^2 \times S^2);\mathbb{Z}),
\]
the surfaces $F_A$ and $F_B$ are also null-homologous. Furthermore, Since $A\sharp B\simeq \sharp ^{g_1} (S^2 \times S^2)$, we have $H_1 (A;\mathbb{Z})\simeq H_1 (B;\mathbb{Z})\simeq 0$. Hence, we apply Theorem 3.5 of Conway-Nagel \cite{conway2020} to get
\[
\begin{split}
    \mathbf{sign}_{\omega} (W_{F_A}) &= \sigma_T(\omega)+\mathbf{sign}(A), \\
    \mathbf{sign}_{\omega} (W_{F_B}) &= \sigma_T(\omega)+\mathbf{sign}(B),
\end{split}
\]
where $W_{F_A}$ and $W_{F_B}$ are the exteriors of $F_A$ and $F_B$, respectively.

Since $\mathbf{sign}(\sharp^{g_1} (S^2\times S^2)) = 0$, and since $H_2 (A;\mathbb{Z}) \oplus H_2 (B;\mathbb{Z}) \simeq H_2 (\sharp ^{g_1} (S^2 \times S^2);\mathbb{Z})$, we have $\mathbf{sign}(A)+\mathbf{sign}(B)=0$. This yields
\[
2\sigma_T(\omega) = \mathbf{sign}_\omega(W_{F_A}) + \mathbf{sign}_\omega(W_{F_B}).
\]

Now we follow the proof of \cite[Theorem 3.4]{conway2021}. Since $\pi_1((\sharp ^{g_1} (S^2 \times S^2)) - F_T) \simeq \mathbb{Z}$ and $\chi((\sharp ^{g_1} (S^2 \times S^2)) - F_T) = 2(g_1 +g_2)-2$, we deduce that 
\[
2\vert \sigma_T (\omega)\vert \le \vert \mathbf{sign}_{\omega}(W_{F_A}) \vert + \vert \mathbf{sign}_{\omega}(W_{F_B}) \vert \le 2(g_1 +g_2).
\]

As shown in Figure \ref{bingcover}, $T$ is isotopic to $K\sharp K^r$, where $K^r$ denotes the knot $K$ with orientation reversed. Since Levine-Tristram signature is invariant under orientation reversal, we have $\sigma_T (\omega) = 2\sigma_K (\omega)$. Thus, by taking $F$ to be the trivial surface-link which realizes $g_{ds}(B(K))$, we get the inequality 
\[
2\vert \sigma_{K}(\omega) \vert \le g_{ds}(B(K)).
\]

Finally, given any positive integer $n$, set $K_n$ to be knot obtained by performing a connected sum of $n$ copies of a slice knot $8_{20}$, and take $L_n =B(K_n)$. Then for $\omega = e^{\frac{\pi i}{3}}$, we have $\sigma_{K}(\omega) = n\sigma_{8_{20}}(\omega) = n$, so we get $g_{ds}(L_n) \ge 2n$ as desired.

\begin{figure}[!hb]
    \centering
    \includegraphics[width=\textwidth]{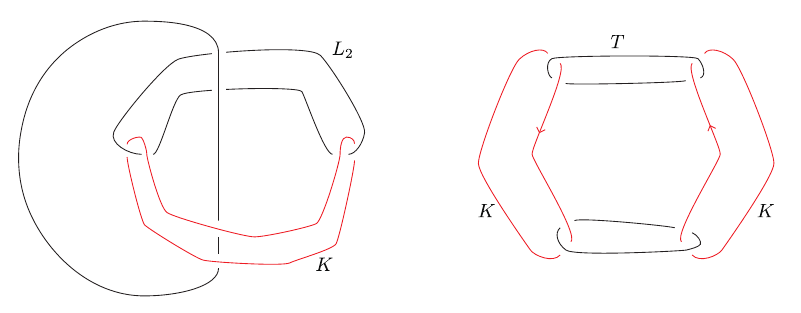}
    \caption{\textbf{Left}: The Bing double $B(K)$. \textbf{Right}: The component $T$ in the preimage of $L_2$}
    \label{bingcover}
\end{figure}
\end{proof}

\begin{rem}
While we used Bing doubles to prove Theorem \ref{mainthm1}, we can also use more complicated patterns induced by other Brunnian links to construct more examples satisfying similar properties. To see how, recall the pattern $P_{L_n,G}$ in Figure \ref{newbrunnian}, and write $P_{L_n,G}=R\cup B$, where $R$ and $B$ are the components of $L_n$ drawn in red and blue, respectively. Let $K_m$ be the knots used in the proof of Theorem \ref{mainthm1} and consider the link $P_{L_n,G}(K_m)$, where we again denote its components by $R$ and $B$. After taking branched double cover over $B$, the lifts of $R$ are isotopic to $(K_m)_{n,1}\sharp (K_m)^{r}_{n,-1}$. Then, using the arguments used in the proof of \ref{mainthm1}, we deduce that $g_{ds}(P_{L_n,G}(K_m))\ge 2m$ and $g_{st}(P_{L_n,G}(K_m))=1$.
\end{rem}

\subsection{Proof of Theorem \ref{mainthm4}} Now we consider higher dimensional cases.  Here, we recall the Bing double construction for $n$-links.
\begin{defn}
Consider a 2-component trivial link $L\subset S^{n+2}$. Let $\gamma \subset S^{n+2} - L$ be a simple closed curve such that $[\gamma] = xyx^{-1}y^{-1} \in \pi_1 (S^{n+2} - L)$ where $x$ and $y$ are meridians of each component of $L$ which generate $\pi_1 (S^{n+2} - L)$. The Bing double of a spherical knot $F$ is the 2-component spherical link obtained by a satellite construction (see Definition \ref{satellitedefn}), using $F$ as a companion and $(L,\gamma)$ as a pattern.  See Figure \ref{bingcover} for a schematic figure. 
\end{defn}

The following lemma is an immediate modification of Theorem \ref{satthm}. 

\begin{lem}
For a slice $n$-knot $K$, its Bing double $B(K)$ is always boundary and weakly doubly slice.
\end{lem}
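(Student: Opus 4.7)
The plan is to run the proof of Theorem~\ref{satthm} in higher dimensions, specialized to the Bing pattern $(L,\gamma)$: here $L = L_1 \cup L_2 \subset S^{n+2}$ is the 2-component trivial $n$-link and $\gamma$ is a simple closed curve representing the commutator $[x,y]$ of the two meridians of $L_1$ and $L_2$. In particular $\gamma$ has zero linking number with each component of $L$, so the pattern has degree zero in the sense of Definition~\ref{satellitedefn}.

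For the boundary part, I would start with a pair of disjoint $(n+1)$-balls $D_1, D_2 \subset S^{n+2}$ bounded by $L_1$ and $L_2$. Since $\gamma$ has linking number zero with each $L_i$, its intersection with $D_i$ is algebraically zero after a general-position perturbation. Tubing the $D_i$ along small arcs in $\gamma$ between paired intersection points of opposite sign produces disjoint Seifert $(n+1)$-manifolds $\Sigma_1,\Sigma_2 \subset S^{n+2} - \nu(\gamma)$ with $\partial \Sigma_i = L_i$. Transporting $\Sigma_i$ through the satellite diffeomorphism $\rho \colon \overline{S^{n+2}-\nu(\gamma)} \to \nu(K) \subset S^{n+2}$ yields disjoint Seifert solids for the two components of $B(K)$, showing that $B(K)$ is a boundary link.

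For the weak double sliceness, I would mimic the slice case of Theorem~\ref{satthm}. Since $K$ is slice, doubling a slice disk for $K$ along $K$ gives an $(n+1)$-knot $F_K \subset S^{n+3}$ with $F_K \cap S^{n+2} = K$. Since $L$ is a trivial $n$-link, there is an unknotted $(n+1)$-sphere $F_L \subset S^{n+3}$ with $F_L \cap S^{n+2} = L$, bounding an $(n+2)$-ball $B \subset S^{n+3}$. Exactly as in Theorem~\ref{satthm}, the degree-zero hypothesis lets us isotope $\gamma$ in $S^{n+3} - F_L$ so that it avoids $B$. The satellite then modifies $F_L$ only outside $B$, so the resulting $(n+1)$-sphere $F := F_L(F_K)$ still bounds an $(n+2)$-ball in $\nu(F_K) \subset S^{n+3}$ and is therefore unknotted. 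Since $F \cap S^{n+2} = B(K)$, this exhibits $B(K)$ as weakly doubly slice.

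The one nontrivial step is the isotopy of $\gamma$ off $B$ in $S^{n+3}$. Since $F_L$ is unknotted, $\pi_1(S^{n+3} - F_L) \cong \mathbb{Z}$ is generated by a meridian of $F_L$, and both meridians of $L$ map to this generator, so $\gamma = [x,y]$ is already null-homotopic there. Combined with the ambient dimension $n+3 \geq 4$, this is enough to move $\gamma$ into any preassigned open region of $S^{n+3} - F_L$, and in particular off $B$; an entirely analogous remark justifies the tubing step used for the boundary statement.
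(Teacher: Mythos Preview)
Your argument is correct and follows exactly the route the paper intends: the paper declares this lemma an ``immediate modification of Theorem~\ref{satthm}'' (with the boundary-link part implicitly borrowed from the earlier lemma on $P_{L,U}(K)$), and you have written out precisely that modification in the higher-dimensional setting.

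One small remark on the last sentence. The phrase ``an entirely analogous remark justifies the tubing step'' is misleading: the $\pi_1$/dimension argument you gave for isotoping $\gamma$ off $B$ in $S^{n+3}$ does \emph{not} carry over to the tubing, because the tubes you add to $D_1$ and the disk $D_2$ are both $(n{+}1)$-dimensional in $S^{n+2}$ and will generically intersect near the points of $D_2\cap\gamma$. The standard fix is to use nested tubes at different radii in $\nu(\gamma)$, or---more in line with the paper's earlier lemma---to excise $\nu(\gamma)$ from each $D_i$ and then cap off the resulting meridian $S^n$'s (which become parallel copies of $K$ under $\rho$) with disjoint parallel Seifert solids of $K$ lying in the exterior of $K$. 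Either variant yields the disjoint Seifert solids you want.
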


Stoltzfus \cite{stoltzfus1978} defined the \emph{doubly null concordance group} $\mathcal{CH}_{n}$ of $n$-knots. Briefly speaking, this group consists of the equivalent classes of $n$-knots where $K$ and $L$ are equivalent if and only if there are doubly slice $n$-knots $M$ and $N$ such that $K\sharp M$ is isotopic to $L\sharp N$.  When $n=2q-1$ is odd, we consider the group 
\[
\mathcal{CH}_{2q-1} ^ {q-1}=\{[K]\in \mathcal{CH}_n\,|\,K \text{ has a }(q-1)\text{-connected Seifert solid}\}.
\]
Recall an $n$-knot is called \emph{simple} if it admits a $[(n-1)/2]$-connected Seifert solid. With this terminology,  $\mathcal{CH}_{2q-1} ^ {q-1}$ is the the group of stable concordance classes of simple $(2q-1)$-knots. 

An algebraic counterpart of $\mathcal{CH}_{2q-1} ^{q-1}$ is so-called \emph{double Witt group} $CH^{(-1)^q} (\mathbb{Z})$. Here we give a brief definition of $CH^{\varepsilon}(\mathbb{Z})$, $\varepsilon = \pm 1$. We first define (algebraic) Seifert forms. 
\begin{defn}
Let  $\varepsilon=\pm1$.  An $\varepsilon$-symmetric Seifert form (or Seifert isometric structure, depending on literature) over $\mathbb{Z}$ is a triple $(N,b,t)$ where 
\begin{itemize}
    \item $N$ is a finitely generated projective $\mathbb{Z}$-module,
    \item $b:N\to N^*$ is an isomorphism of $\mathbb{Z}$-modules which satisfies $b(x)(y) = \varepsilon b(y)(x)$ for all $x,y\in N$,
    \item $t:N\to N$ is an endomorphism such that $b(tx)(y) = b(x)((1-t)y)$.
\end{itemize}
\end{defn}
A Serfert form $(N,b,t)$ is hyperbolic if there are  $t$-invariant $\mathbb{Z}$-submodules $L_1,L_2\subset N$ such that $N=L_1\oplus L_2$ and $L_i^\perp:=\{y\in N\,|\, b(y,x)=0\text{ for all }x\in L_i\}=L_i$. We say that two Seifert forms $K$ and $L$ are equivalent if and only if there are hyperbolic Seifert forms $M,N$ such that $K\oplus M$ is isometric to $L\oplus N$ where $\oplus$ is the orthogonal sum. The group $CH^\varepsilon (\mathbb{Z})$ consists of the equivalent classes of $\varepsilon$-symmetric Seifert forms. It is known that, for $q>1$, there is an isomorphism $\mathcal{CH}^{q-1}_{2q-1}\to CH^{(-1)^{q}}(\mathbb{Z})$ 
\cite{stoltzfus1978,levine1977,sumners1971}. This isomorphism maps a knot $K$ with a Seifert solid $V$ to the equivalent class of the Seifert form $(fH_q(V;\mathbb{Z}), b, t)$ where
\begin{itemize}
\item  $f H_q(V;\mathbb{Z})$ is the free part of $H_q(V;\mathbb{Z})$,
\item $b$ is the intersection product,
\item $t$ is given by the  property that $(i^+ _* - i^- _*) (t x) = i^+_* (x)$, $x\in fH_q(V;\mathbb{Z})$. Here $i^{\pm}_* : fH_q(V;\mathbb{Z}) \to fH_q(S^{2q+1}\setminus K;\mathbb{Z})$ are induced maps of the positive/negative push-off. 
\end{itemize}

A Seifert matrix associated to the choice of a Seifert solid $V$ of $K$ is the matrix representation of the pairing $fH_q(V;\mathbb{Z}) \otimes fH_q(V;\mathbb{Z}) \to \mathbb{Z}$ given by $x\otimes y \mapsto b(tx,y)$. Conversely, given a $(2g\times 2g)$ integral matrix $\psi$ such that $\psi+\epsilon \psi^T$ is unimodular, we can find a Seifert form whose Seifert matrix is $\psi$ by letting $N=\mathbb{Z}^{2g}$, $b=\psi+\varepsilon \psi^T$, and $t=b^{-1}\psi$. Observe that $\psi$ is hyperbolic if and only if its Seifert form is hyperbolic. 

We recall the following general obstruction for a matrix being hyperbolic.
\begin{lem}\label{hyperbolic}
Let $A$ be a $(2g\times 2g)$-matrix over $\mathbb{R}$. If $A$ is hyperbolic then the signature of the Hermitian matrix  $(1-\omega)A+(1-\overline{\omega})A^T$ vanishes for all $\omega\in S^1\setminus \{1\} \subset \mathbb{C}$.
\end{lem}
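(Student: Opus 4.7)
The approach is to use the hyperbolic splitting to put $A$ into block anti-diagonal form, after which the Hermitian matrix in question will visibly have signature zero. By definition, hyperbolicity of $A$ means that the Seifert form $(\mathbb{R}^{2g}, b, t)$ with $b = A + \varepsilon A^T$ and $t = b^{-1}A$ admits a decomposition $\mathbb{R}^{2g} = L_1 \oplus L_2$ into $t$-invariant Lagrangians. In any basis whose first $g$ vectors span $L_1$ and whose last $g$ span $L_2$, the $\varepsilon$-symmetric form $b$ is block anti-diagonal while the operator $t$ is block diagonal, so $A = bt$ itself becomes
\[
A' \;:=\; Q^{T} A\, Q \;=\; \begin{pmatrix} 0 & X \\ Y & 0 \end{pmatrix}
\]
for some real invertible $Q$ and some real $g\times g$ matrices $X, Y$.

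Next I invoke Sylvester's law of inertia. Since $Q$ is real, the map $M \mapsto Q^{T} M Q$ is a Hermitian congruence and hence preserves the signature. Applying it to $M = (1-\omega)A + (1-\overline{\omega})A^T$ replaces $A$ by $A'$ inside, so it suffices to compute the signature when $A$ itself is block anti-diagonal. A direct block computation then gives
\[
(1-\omega)A' + (1-\overline{\omega})(A')^T \;=\; \begin{pmatrix} 0 & Z \\ \overline{Z}^{T} & 0 \end{pmatrix}, \qquad Z := (1-\omega)X + (1-\overline{\omega})\, Y^T,
\]
and a quick check confirms that this block matrix is Hermitian.

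Finally, any Hermitian matrix of this block anti-diagonal shape has signature zero: if $(u, w)^{T}$ is an eigenvector with eigenvalue $\lambda \neq 0$, then $(u, -w)^{T}$ is a linearly independent eigenvector with eigenvalue $-\lambda$, so the nonzero eigenvalues split into opposite-sign pairs. I do not anticipate any serious obstacle beyond verifying that the $t$-invariant Lagrangian decomposition genuinely yields a real congruence putting $A$ into block anti-diagonal form; this is an elementary unwinding of the definitions of $b$ and $t$ in the adapted basis.
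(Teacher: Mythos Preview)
Your proof is correct and follows essentially the same approach as the paper: reduce to block anti-diagonal form via a real congruence, then show the resulting Hermitian matrix has signature zero. The only cosmetic difference is in the final step---the paper notes that the characteristic polynomial of $\begin{pmatrix} 0 & Z \\ Z^* & 0\end{pmatrix}$ equals $p(x^2)$ with $p$ the characteristic polynomial of $Z^* Z$, whereas you use the involution $(u,w)\mapsto (u,-w)$ to pair eigenvalues $\lambda$ with $-\lambda$; these are two phrasings of the same observation.
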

\begin{proof}
If $A$ is hyperbolic, we can find an invertible matrix $C$ such that $C^T A C = \begin{pmatrix} 0 & P \\ Q & 0\end{pmatrix}$ for some $(g\times g)$-matrices $P$ and $Q$. Hence, $(1-\omega)A + (1-\overline{\omega})A^T$ is congruent to the matrix 
\[
A_\omega:=\begin{pmatrix} 0 & X_\omega \\ (X_\omega )^* & 0 \end{pmatrix},
\]
where $X_\omega=(1-\omega) P + (1-\overline{\omega}) Q^T$.

We need to show that the signature of $A_\omega$ is always 0 regardless of $\omega$. In fact, this follows from the observation that the characteristic polynomial of $A_\omega$ is of the form $p_\omega (x^2)$ where $p_\omega (x)$ is the characteristic polynomial of $(X_\omega )^*X_\omega$. 
\end{proof}

We consider the slice knot $K=8_{20}$ which appeared in the proof of Theorem \ref{mainthm1}. This knot has the Seifert matrix
\[
\psi=\begin{pmatrix}
 -1 &  -1 & -1 & -1\\   0 & 0 & -1 & -1\\   0& -1 & 0& -1\\  0& 0& -1& 0
\end{pmatrix}.
\]
The associated Seifert form $A=(N,b,t)$ of $K$ is then given by
\begin{itemize}
    \item $N=\mathbb{Z}^4$,
    \item $b$ is given in the matrix form
    \[
    b=\begin{pmatrix} 0 & -1 & -1 & -1 \\ 1& 0 & 0 & -1 \\ 1 & 0& 0& 0 \\ 1& 1& 0& 0 \end{pmatrix},
    \]
    \item $t$ is the endomorphism $N\to N$ defined in terms of the matrix
    \[
    t=\begin{pmatrix} 0 & -1 & 0 & -1 \\ 0& 1 & -1 & 1 \\ 1 & 1& 1& 0 \\ 0& -1& 1& 0 \end{pmatrix}.
    \]
\end{itemize}
We already know that $K\sharp K^r$ is not doubly slice as its signature function is not constantly vanishing. According to Lemma \ref{hyperbolic}, this also implies that the Seifert matrix $\begin{pmatrix} \psi & 0 \\ 0 & \psi^T\end{pmatrix}$ of $K\sharp K^r$ is metabolic but not hyperbolic. Now for each odd $q\ge 1$, we can find the simple slice $(2q-1)$-knot $F$  whose  Seifert form is $A$ \cite{levine1969}. Then we know that $F\sharp F^r$ is not doubly slice \cite{sumners1971}.

When $q$ is even and $n=2q-1$ is odd, we consider the Seifert form $A=(N,b,t)$ given by 
\begin{itemize}
    \item $N=\mathbb{Z}^4$, a rank 4 free abelian group,
    \item $b$ is given in the matrix form
    \[
    b=\begin{pmatrix} 0 & 0 & 1 & -1 \\ 0& 0 & 1 & 0 \\ 1 & 1& 2& 0 \\ -1& 0& 0& 2 \end{pmatrix},
    \]
    \item $t$ is the endomorphism $N\to N$ defined in terms of the matrix
    \[
    t=\begin{pmatrix} 0 & -1 & 2 & -1 \\ 1& 1 & -3 & 3 \\ 0 & 0& 1& -1 \\ 0& 0& 1& 0 \end{pmatrix}.
    \]
\end{itemize}
This Seifert form is induced from the Seifert matrix 
\[
\psi=\begin{pmatrix} 0 & 0& 0& -1\\ 0&0&1&-1 \\ 1&0&1&0 \\ 0&1 &0&1\end{pmatrix}
\]
in the way that $b=\psi+\psi^T$, and $t=b^{-1}\psi$ as we mentioned above. We will show that $\psi$ is metabolic but not hyperbolic. This implies that the Seifert form $A$ is metabolic but not hyperbolic.

In fact, it suffices to show that $\psi$ is not hyperbolic since $\psi$ is clearly metabolic. For this, we compute the ``signature function'' again. By direct computation, we see that, when $\omega=e^{2\pi i /3}$, the matrix $(1-\omega) \psi + (1-\overline{\omega}) \psi^T$ has eigenvalues $0, 3, \frac{3-\sqrt{57}}{2}$ and $\frac{3+\sqrt{57}}{2}$. Hence, its signature is 1. Then Lemma \ref{hyperbolic} shows that $\psi$ cannot be hyperbolic.

Therefore, we deduce that $A$ is metabolic and $A\oplus A$ is not hyperbolic. By \cite{levine1969}, we can find a simple slice $n$-knot $F$ such that $F\sharp F^r$ is not doubly slice. 

Now we consider the case when $n=2q$ is an even integer. In this case, Stoltzfus \cite{stoltzfus1978} showed that there is an surjective homomorphism $\mathcal{CH}_{2q} \to CH^{(-1)^q}(\mathbb{Q}/\mathbb{Z})$ provided $q>1$.
On the other hand, the group $CH^{(-1)^q}(\mathbb{Q}/\mathbb{Z})$ consists of infinite direct sums of $\mathbb{Z}_2$ and $\mathbb{Z}_4$ \cite{hillman1986}. In particular, there is an $n$-knot $F$ such that $F\sharp F$ is not doubly slice. Note that $F$ is slice since every even dimensional knot is slice \cite{kervaire1965}. 

The above argument proves the following lemma:

\begin{lem}\label{notdoublyslice}
Let $n$ be any natural number not equal to 2. There is a slice $n$-knot $K$ such that $K\sharp K^r$ is not doubly slice. 
\end{lem}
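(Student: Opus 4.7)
The plan is to reduce the problem to an algebraic question about Seifert forms and then use realization theorems of Levine and Stoltzfus to promote a metabolic-but-not-hyperbolic form back to an actual knot. For $n = 2q-1$ odd, the isomorphism $\mathcal{CH}_{2q-1}^{q-1}\cong CH^{(-1)^q}(\mathbb{Z})$ reduces the question to producing an $\varepsilon$-symmetric Seifert form whose Seifert matrix $\psi$ is metabolic (so the associated knot $K$ is slice) but whose block-sum $\psi \oplus \psi^T$, which is the Seifert matrix of $K \sharp K^r$, fails to be hyperbolic. I would use Lemma \ref{hyperbolic} throughout as the single obstruction: if $\psi \oplus \psi^T$ is hyperbolic then the hermitian signature of $(1-\omega)(\psi \oplus \psi^T) + (1-\overline{\omega})(\psi \oplus \psi^T)^T$ must vanish for every $\omega \in S^1 \setminus \{1\}$, so exhibiting one $\omega$ where it is nonzero suffices.

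I would split the odd case on the parity of $q$. When $q$ is odd (so $\varepsilon = -1$), the Seifert matrix of the classical slice knot $8_{20}$ will work: its Levine--Tristram signature at $\omega = e^{\pi i/3}$ is nonzero, so by Lemma \ref{hyperbolic} the block-sum form is not hyperbolic. For $q=1$ this finishes the $1$-dimensional case; for $q \ge 3$ I apply Levine's realization theorem \cite{levine1969} to obtain a simple $(2q-1)$-knot carrying this same Seifert form, and the obstruction carries over. When $q$ is even (so $\varepsilon = +1$), no classical knot is available and a genuinely new example must be exhibited. I would search small rank-$4$ integer matrices $\psi$ with $\psi + \psi^T$ unimodular and a visible upper-triangular block of zeros (to guarantee metabolicity), then test the resulting $\psi\oplus \psi^T$ at a convenient root of unity such as $\omega = e^{2\pi i / 3}$; if a single eigenvalue computation shows nonzero signature there, Lemma \ref{hyperbolic} and Levine realization together deliver the desired knot.

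For $n = 2q$ even with $q \geq 2$ I would appeal to Stoltzfus's surjection $\mathcal{CH}_{2q} \twoheadrightarrow CH^{(-1)^q}(\mathbb{Q}/\mathbb{Z})$ together with Hillman's decomposition \cite{hillman1986} of the target into infinite direct sums of $\mathbb{Z}_2$ and $\mathbb{Z}_4$ summands. Picking a class of order exactly $4$ produces a knot $F$ whose double represents a nonzero order-$2$ element and is therefore not doubly slice; since every even-dimensional knot is slice by Kervaire \cite{kervaire1965}, $F$ itself is automatically slice, giving the statement in this range. (The extra care needed to convert $F\sharp F$ into $F \sharp F^r$ is a minor bookkeeping issue that should be absorbed by the flexibility of choosing the class.)

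The main obstacle I expect is the case $n = 2q-1$ with $q$ even: the $(-1)$-symmetric form of a classical low-dimensional knot is not available, so a Seifert matrix must be written down and its doubled form tested for non-hyperbolicity by direct linear algebra. Once a small candidate is found, the verification is a one-shot eigenvalue calculation at a single root of unity, but finding the candidate is the only step that is not a formal deduction from previously known machinery.
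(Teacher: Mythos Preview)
Your proposal is correct and follows essentially the same route as the paper: the same case split on the parity of $n$ and $q$, the same obstruction via Lemma~\ref{hyperbolic}, the same realization inputs (Levine for odd $n$, Stoltzfus--Hillman--Kervaire for even $n$), and even the same test points $\omega=e^{\pi i/3}$ and $\omega=e^{2\pi i/3}$. The only substantive difference is that the paper explicitly writes down the rank-$4$ Seifert matrix for the $q$-even odd-dimensional case rather than merely describing the search you outline, and---like you---it leaves the $F\sharp F$ versus $F\sharp F^r$ bookkeeping in the even-dimensional case unaddressed.
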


Now by adopting the main idea of the proof of Theorem \ref{mainthm1}, we can obtain Theorem \ref{mainthm4}.

\begin{proof}[Proof of Theorem \ref{mainthm4}]
Take an $n$-knot $K$ as in Lemma \ref{notdoublyslice}. Let $F:=B(K)$ be the Bing double of $K$. $F$ is a unknotted, weakly doubly slice, boundary link. Hence it suffices to show that $F$ is not strongly doubly slice. Let $F_1$, $F_2$ be components of $F$.

On the contrary, suppose that $F$ is strongly doubly slice. Let $D=D_1\cup D_2$ be a trivial 2-component $(n+1)$-link in $S^{n+3}$ such that $D_i \cap S^{n+2} = F_i$. We take a branched double cover $\Sigma$ of $S^{n+3}$ along $D_1$. As $D_1$ is trivial, we know that $\Sigma$ is diffeomorphic to $S^{n+3}$. Note that the lift of $F_2$  is isotopic to $K \sharp K^r$. Observe that $K \sharp K^r$ is doubly slice since it is a cross-section of the lift of $D_2$. This contradicts our choice of $K$. 
\end{proof}

\begin{rem}
Our approach to prove Theorem \ref{mainthm4} (and Lemma \ref{notdoublyslice}) cannot cover the $n=2$ case. The main difficulty is that the realization problem is not established yet. Namely, we do not know whether each element of $CH^\varepsilon (\mathbb{Q}/\mathbb{Z})$ can be realized as a Blanchfield form of some $2$-knot. 
\end{rem}

\subsection{Proof of Theorem \ref{mainthm2}}\label{Topologicallynotsmoothly}

We now move on to prove Theorem \ref{mainthm2}. Recall that, when $L$ is a Brunnian link with a distinguished component $A$ and $K$ is doubly slice, then $P_{L,A}(K)$ is strongly doubly slice. As mentioned in the introduction, this statement holds in both locally flat and smooth categories. Thus, if we start with a companion $K$ which is topologically but not smoothly doubly slice, then $P_{L,A}(K)$ is a topologically strongly doubly slice 2-component link which might not be smoothly strongly slice. This observation leads us to the following proof.

\begin{proof}[Proof of Theorem \ref{mainthm2}]
Let $K$ be any of the knots which satisfies \cite[Theorem B]{meier2015}. In particular, $K$ is smoothly slice and topologically doubly slice, but $\sharp^n K$ is not smoothly doubly slice for any positive integer $n$; note that $K\sharp K^r$ is also not smoothly doubly slice, since Meier's arguments are insensitive under orientation reversal. Then its Bing double $B(K)$ is a 2-component boundary link consisting of unknotted(thus doubly slice) components, which is topologically strongly doubly slice and smoothly weakly doubly slice for both of its quasi-orientations. It only remains to prove that $B(K)$ is not smoothly strongly doubly slice.

Suppose in contrary that $B(K)$ is smoothly strongly doubly slice. Then it is a smooth cross-section of the trivial 2-component spherical link $F$. As in the proof of Theorem \ref{mainthm1}, we take double branched cover of everything over along a component of $F$ and choose a component $T\simeq K\sharp K^r$ of the lift of the other component of $B(K)$. Then we get a splitting $S^4 = U\cup_{S^3} V$ for some smooth 4-manifolds $U$ and $V$ with $\partial U =\partial V=S^3$, and $T$ admits slice disks $D_U \subset U$ and $D_V \subset V$ such that $D_U \cup D_V$ is a smooth unknotted sphere. 

The topological Schoenflies theorem \cite{brown1960proof, mazur1961embeddings} ensures that both $U$ and $V$ are topologically a $4$-ball, so the double branched covers $\Sigma(D_U)$ and $\Sigma(D_V)$ of $U$ and $V$ along the disks $D_U$ and $D_V$ give metabolizers of $H_1 (\Sigma(T);\mathbb{Z})$, where $\Sigma(T)$ is the branched double cover of $S^3$ along $T$. Furthermore, $\Sigma(D_U)$ and $\Sigma(D_V)$ are rational homology $4$-balls. Since Meier's arguments rely only on the existence of metabolizers in the branched double cover and the fact that $d$-invariants are $\mathbb{Q}$-homology $\mathbf{Spin}^c$-cobordism invariants, we see that $D_U \cup D_V$ cannot be a smooth unknotted sphere, a contradiction. Therefore $B(K)$ is not smoothly strongly doubly slice.
\end{proof}


\bibliographystyle{amsalpha}
\bibliography{ref}

\providecommand{\bysame}{\leavevmode\hbox to3em{\hrulefill}\thinspace}
\providecommand{\MR}{\relax\ifhmode\unskip\space\fi MR }
\providecommand{\MRhref}[2]{%
  \href{http://www.ams.org/mathscinet-getitem?mr=#1}{#2}
}
\providecommand{\href}[2]{#2}
\begin{thebibliography}{COT03}

\bibitem[Aud14]{audoux2014}
Benjamin Audoux, \emph{The {R}asmussen invariant and the {M}ilnor conjecture},
  Winter Braids Lect. Notes \textbf{1} (2014), no.~Winter Braids IV (Dijon,
  2014), Exp. No. 1, 19. \MR{3703248}

\bibitem[Bro60]{brown1960proof}
Morton Brown, \emph{A proof of the generalized schoenflies theorem}, Bulletin
  of the American Mathematical Society \textbf{66} (1960), no.~2, 74--76.

\bibitem[Che21]{chen2021}
Wenzhao Chen, \emph{A lower bound for the double slice genus}, Trans. Amer.
  Math. Soc. \textbf{374} (2021), no.~4, 2541--2558. \MR{4223025}

\bibitem[CN20]{conway2020}
Anthony Conway and Matthias Nagel, \emph{Stably slice disks of links}, J.
  Topol. \textbf{13} (2020), no.~3, 1261--1301. \MR{4125756}

\bibitem[CO21]{conway2021}
Anthony Conway and Patrick Orson, \emph{Abelian invariants of doubly slice
  links}, arXiv: 2101.09121 (2021).

\bibitem[COT03]{cochran2003}
Tim~D. Cochran, Kent~E. Orr, and Peter Teichner, \emph{Knot concordance,
  {W}hitney towers and {$L^2$}-signatures}, Ann. of Math. (2) \textbf{157}
  (2003), no.~2, 433--519. \MR{1973052}

\bibitem[Cro71]{crowell1971}
R.~H. Crowell, \emph{The derived module of a homomorphism}, Advances in Math.
  \textbf{6} (1971), 210--238 (1971). \MR{276308}

\bibitem[FQ90]{freedman1990}
Michael~H. Freedman and Frank Quinn, \emph{Topology of 4-manifolds}, Princeton
  Mathematical Series, vol.~39, Princeton University Press, Princeton, NJ,
  1990. \MR{1201584}

\bibitem[Fre84]{freedman1984}
Michael~H. Freedman, \emph{The disk theorem for four-dimensional manifolds},
  Proceedings of the {I}nternational {C}ongress of {M}athematicians, {V}ol. 1,
  2 ({W}arsaw, 1983), PWN, Warsaw, 1984, pp.~647--663. \MR{804721}

\bibitem[Hil86]{hillman1986}
Jonathan~A. Hillman, \emph{Finite simple even-dimensional knots}, J. London
  Math. Soc. (2) \textbf{34} (1986), no.~2, 369--374. \MR{856519}

\bibitem[Ker65]{kervaire1965}
Michel~A. Kervaire, \emph{Les n\oe uds de dimensions sup\'{e}rieures}, Bull.
  Soc. Math. France \textbf{93} (1965), 225--271. \MR{189052}

\bibitem[Kim20]{kim2020}
Seungwon Kim, \emph{Gluck twist and unknotting of satellite 2-knots}, arXiv:
  2009.07353 (2020).

\bibitem[KSS82]{kawauchi1982}
Akio Kawauchi, Tetsuo Shibuya, and Shin'ichi Suzuki, \emph{Descriptions on
  surfaces in four-space. {I}. {N}ormal forms}, Math. Sem. Notes Kobe Univ.
  \textbf{10} (1982), no.~1, 75--125. \MR{672939}

\bibitem[Lev69]{levine1969}
J.~Levine, \emph{Knot cobordism groups in codimension two}, Comment. Math.
  Helv. \textbf{44} (1969), 229--244. \MR{246314}

\bibitem[Lev77]{levine1977}
Jerome Levine, \emph{Knot modules. {I}}, Trans. Amer. Math. Soc. \textbf{229}
  (1977), 1--50. \MR{461518}

\bibitem[LM15]{livingston2015}
Charles Livingston and Jeffrey Meier, \emph{Doubly slice knots with low
  crossing number}, New York J. Math. \textbf{21} (2015), 1007--1026.
  \MR{3425633}

\bibitem[Maz61]{mazur1961embeddings}
BC~Mazur, \emph{On embeddings of spheres}, Acta Mathematica \textbf{105}
  (1961), no.~1-2, 1--17.

\bibitem[Mei15]{meier2015}
Jeffrey Meier, \emph{Distinguishing topologically and smoothly doubly slice
  knots}, J. Topol. \textbf{8} (2015), no.~2, 315--351. \MR{3356764}

\bibitem[MM21]{mccoy2021}
Duncan McCoy and Clayton McDonald, \emph{Doubly slice montesinos links}, arXiv:
  2101.04083 (2021).

\bibitem[Sto78]{stoltzfus1978}
Neal~W. Stoltzfus, \emph{Algebraic computations of the integral concordance and
  double null concordance group of knots}, Knot theory ({P}roc. {S}em.,
  {P}lans-sur-{B}ex, 1977), Lecture Notes in Math., vol. 685, Springer, Berlin,
  1978, pp.~274--290. \MR{521738}

\bibitem[Sum71]{sumners1971}
D.~W. Sumners, \emph{Invertible knot cobordisms}, Comment. Math. Helv.
  \textbf{46} (1971), 240--256. \MR{290351}

\end{thebibliography}
\end{document}